\documentclass[11pt]{amsart}  
\usepackage{amsmath,amssymb,amsthm,amsfonts}
\usepackage{mathrsfs}
\usepackage{xcolor}
\usepackage[all,cmtip]{xy}

\usepackage{geometry}
 \usepackage[dvipsnames]{xcolor} 
\usepackage[colorlinks]{hyperref}

\usepackage[color=Orange!50!white, textsize=scriptsize]{todonotes}

\usepackage[noabbrev,capitalize]{cleveref}
\crefname{equation}{}{}

\usepackage{cancel}

\usepackage{thmtools} 
\usepackage{thm-restate}
\usepackage{lipsum}

\newtheorem{theorem}{Theorem}[section]
\newtheorem{proposition}[theorem]{Proposition}

\newtheorem{lemma}[theorem]{Lemma}

\newtheorem{corollary}[theorem]{Corollary}

\newtheorem*{question*}{Question}

\theoremstyle{definition}

\newtheorem{defn}[theorem]{Definition}

\newtheorem*{definition*}{Definition}

\theoremstyle{remark}

\newcommand{\EE}{\mathbb{E}}
\newcommand{\CC}{\mathbb{C}}

\newcommand{\RR}{\mathbb{R}}

\newcommand{\FF}{\mathbb{F}}
\newcommand{\TT}{\mathbb{T}}
\newcommand{\ZZ}{\mathbb{Z}}

\newcommand{\cP}{\mathcal{P}}

\title{Simultaneous popular polynomial differences over finite fields}

\author{David Conlon}
\address{Department of Mathematics, California Institute of Technology, Pasadena, CA 91125, USA}
\email{\{dconlon,ddong124,ghong\}@caltech.edu}
\thanks{Research supported by NSF Award DMS-2348859.}

\author{Dingding Dong}

\author{Guo-Dong Hong}

\begin{document}

\maketitle

\begin{abstract}
Green's popular difference theorem says that for every \(\varepsilon>0\), all sufficiently large primes \(p\), and every set \(A\subseteq\mathbb F_p\) of density \(\alpha\), there exists a nonzero \(d\in\mathbb F_p\) such that 
\[
    \mathbb E_{x\in\mathbb F_p}
    1_A(x)1_A(x+d)1_A(x+2d)
    \geq
    \alpha^3-\varepsilon.
\]
We show that a stronger simultaneous popular difference phenomenon holds for polynomial configurations. Namely, if $\mathcal P=\{P_1,\dots,P_k\} \subset \mathbb Z[t]$ is a fixed collection of linearly independent polynomials with zero constant terms, we show that for every \(\varepsilon>0\), all sufficiently large primes \(p\), and every set \(A\subseteq\mathbb F_p\) of density \(\alpha\), there exists a nonzero \(d\in\mathbb F_p\) such that
\[
    \mathbb E_{x\in\mathbb F_p}
    1_A(x)
    \prod_{i=1}^k
    1_A\bigl(x+P_i(d)\bigr)^{\omega_i}
    \geq
    \alpha^{1+\sum_i\omega_i}-\varepsilon
\]
simultaneously for every \(\omega=(\omega_1,\dots,\omega_k)\in\{0,1\}^k\). 

We also show that such simultaneous popular difference phenomena have sharp limitations by proving that for every sufficiently large prime \(p\), there is a constant \(c>0\) such that, for all sufficiently large \(n\), one can find a set \(A\subseteq\mathbb F_p^n\) of density \(1/2+o_n(1)\) satisfying
\[
    \max_{d\neq 0}
    \min\left\{
        \mathbb E_{x\in\mathbb F_p^n}
        1_A(x)1_A(x+d)1_A(x+2d),
        \mathbb E_{x\in\mathbb F_p^n}
        1_A(x)1_A(x+2d)1_A(x+4d)
    \right\}
    \leq
    \frac18-c.
\]
That is, the strengthening of Green's result, in this case over $\mathbb F_p^n$ for $p$ fixed and $n$ tending to infinity, requiring that both \(d\) and \(2d\) are simultaneously popular differences for three-term arithmetic progressions is false.  
\end{abstract}

\section{Introduction}

Szemerédi's theorem \cite{S75} asserts that every subset of the integers with positive upper density contains arbitrarily long arithmetic progressions.  Furstenberg \cite{F77} gave a proof of Szemerédi's theorem using ergodic theory, showing that the combinatorial statement follows from a multiple recurrence theorem for measure-preserving systems.  This ergodic viewpoint has since become of fundamental importance in additive combinatorics and Ramsey theory.

Using this viewpoint, a far-reaching polynomial extension of Szemerédi's theorem was obtained by Bergelson and Leibman \cite{BL96}. Their polynomial Szemerédi theorem, as it is known, states that if \(A\subseteq \mathbb Z\) has positive upper density and \(P_1,\dots,P_k \in \mathbb Z[t]\) satisfy \(P_i(0)=0\) for each $i = 1, \dots, k$, then \(A\) contains polynomial configurations of the form
\[
    x, x+P_1(d), \dots, x+P_k(d)
\]
with \(d\neq 0\).  Equivalently, in ergodic terms, if \((X,\mathcal B,\mu,T)\) is an invertible measure-preserving system and \(A\in\mathcal B\) has positive measure, then one has polynomial multiple recurrence, meaning that 
\[
    \mu\bigl(A\cap T^{-P_1(d)}A\cap\cdots\cap T^{-P_k(d)}A\bigr)>0
\]
for some \(d\in\mathbb N\).

One can ask for stronger recurrence statements in which the relevant intersection is not merely positive, but has essentially the size predicted by random behavior.  In the single-recurrence setting, Khintchine's recurrence theorem states that, for every \(\varepsilon>0\), there exists \(d\in\mathbb N\) such that
\[
    \mu(A\cap T^{-d}A)>\mu(A)^2-\varepsilon.
\]
Much work has gone into extending this result in various directions. For instance, a result of Frantzikinakis and Kra \cite{FK06} states that if \(P_1,\dots,P_k\) is a family of linearly independent polynomials, then, for every \(\varepsilon>0\), there exists \(d\in\mathbb N\) such that
\[
    \mu(A\cap T^{-P_1(d)}A \cap \dots \cap T^{-P_k(d)}A)>\mu(A)^{k+1}-\varepsilon.
\]

A different strengthening of Khintchine's theorem, which they called simultaneous polynomial recurrence, was found by Lyall and Magyar \cite{LM11}. 
They showed that, for any family of polynomials \(P_1,\dots,P_k\) with $P_i(0) = 0$ for all $i = 1, \dots, k$ and every \(\varepsilon>0\), there exists \(d\in\mathbb N\) such that 
\[
    \mu(A\cap T^{-P_i(d)}A) > \mu(A)^2-\varepsilon
\]
for all $i = 1, \dots, k$. That is, a Khintchine-type lower bound holds simultaneously for each polynomial recurrence. 

The first purpose of this paper is to establish a strengthened analogue of this simultaneous recurrence phenomenon, a common generalization of the Frantzikinakis--Kra and Lyall--Magyar results, over finite fields. We consider dense subsets of \(\mathbb F_p\) and polynomial configurations
\[
    x, x+P_1(d), \dots, x+P_k(d),
\]
where \(d\in\mathbb F_p\setminus\{0\}\). Provided that the polynomials are linearly independent, we prove that there exists a single nonzero parameter \(d\) for which all subconfigurations of this polynomial pattern occur with essentially random density.

More formally, let $\mathcal P=\{P_1,\dots,P_k\} \subset \mathbb Z[t]$ be a collection of polynomials.  For \(d\neq 0\), define
\[
    I_{\mathcal P}(f_0,\dots,f_k)(d)
    :=
    \mathbb E_{x\in\mathbb F_p}
    f_0(x)\prod_{i=1}^k f_i(x+P_i(d)).
\]
Given $\omega=(\omega_1,\dots,\omega_k)\in\{0,1\}^k$, we define \(\mathcal P_\omega\) to be the subcollection of \(\mathcal P\) containing exactly those \(P_i\) with \(\omega_i=1\) and set 
\[
    I_{\mathcal P_\omega}(f_0,\dots,f_k)(d)
    :=
    \mathbb E_{x\in\mathbb F_p}
    f_0(x)
    \prod_{i=1}^k
    f_i\bigl(x+P_i(d)\bigr)^{\omega_i}.
\]
With this notation, our first result is as follows. 

\begin{theorem}
\label{thm_simultaneous polynomial common difference}
Suppose \(k\geq 1\) and $\mathcal P=\{P_1,\dots,P_k\} \subset \mathbb Z[t]$ is a collection of linearly independent polynomials with zero constant terms.  Then, for every \(\varepsilon>0\), there exists $p_0=p_0(\varepsilon,\mathcal P)$ such that, for every prime \(p\geq p_0\) and every \(A\subseteq\mathbb F_p\), there exists \(d\in\mathbb F_p\setminus\{0\}\) such that
\[
    I_{\mathcal P_\omega}(1_A,\dots,1_A)(d)
    \geq
    \alpha^{\,1+\sum_i\omega_i}-\varepsilon
\]
holds simultaneously for every $\omega=(\omega_1,\dots,\omega_k)\in\{0,1\}^k$,
where $\alpha=\mathbb E_{x\in\mathbb F_p}1_A(x)$.
\end{theorem}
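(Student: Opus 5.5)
The idea is to show that, on average over \(d\), every subconfiguration count is already close to its random value. The key input is the finite-field polynomial equidistribution (``Bergelson--Leibman over \(\mathbb F_p\)'') for linearly independent polynomials, in the form proved by Peluse: if \(P_1,\dots,P_k\) are linearly independent with zero constant terms, then for \(1\)-bounded \(f_0,\dots,f_k\)
\[
    \mathbb E_{d\in\mathbb F_p} I_{\mathcal P}(f_0,\dots,f_k)(d)
    = \Bigl(\prod_{i=0}^k \mathbb E f_i\Bigr) + O_{\mathcal P}(p^{-c})
\]
for some \(c=c(\mathcal P)>0\). Every subcollection \(\mathcal P_\omega\) is again linearly independent with zero constant terms, so the same holds for each \(\omega\). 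Mere averages do not suffice, however: we need one \(d\) that is good for all \(\omega\) simultaneously. To get this, I will control the \emph{second moment} in \(d\).

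\textbf{Step 1 (variance bound).} For each \(\omega\), write \(X_\omega(d)=I_{\mathcal P_\omega}(1_A,\dots,1_A)(d)\) and \(m_\omega=\alpha^{1+|\omega|}\). We have
\[
    \mathbb E_d X_\omega(d)^2=\mathbb E_{x,y,d}\,1_A(x)1_A(y)\prod_{i:\omega_i=1}1_A(x+P_i(d))1_A(y+P_i(d)).
\]
Substitute \(y=x+h\) and define \(g_h(z)=1_A(z)1_A(z+h)\). Then
\[
    \mathbb E_d X_\omega^2=\mathbb E_h\,\mathbb E_{x,d}\, g_h(x)\prod_{i:\omega_i=1} g_h(x+P_i(d)).
\]
For each fixed \(h\), apply Peluse's theorem to the \(1\)-bounded functions \(g_h\). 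This gives \(\mathbb E_h (\mathbb E g_h)^{1+|\omega|}+O(p^{-c})\). By Fourier analysis, \(\mathbb E_h(\mathbb E g_h)^{m}\) is not \(\alpha^{2m}\) in general, so this alone does not give small variance.

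Instead I will use a one-sided argument. Decompose \(1_A=\alpha+f\) with \(\mathbb E f=0\) and expand \(X_\omega(d)\). The constant term is \(m_\omega\). Every other term is an expression \(I_{\mathcal P_{\omega'}}\) with \(\omega'\le\omega\), with some \(f\)'s inserted at specific slots. If a term contains \(f\) at some slot, Peluse's inverse theorem bounds its \(d\)-average by \(\|f\|_{U^s}\)-type/degree-lowering quantities, which are in fact \(O(p^{-c})\) in \(\mathbb F_p\) since \(\mathbb E f=0\). That gives the mean but not pointwise control.

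\textbf{Step 2 (simultaneity via positivity, the main obstacle).} To handle all \(\omega\) at once, I plan to prove the stronger statement that \(\mathbb E_d |X_\omega(d)-m_\omega|^2=O(p^{-c'})\) for every \(\omega\ne0\). Once this holds, Chebyshev and a union bound over the \(2^k\) choices of \(\omega\) produce a \(d\ne0\) with \(|X_\omega(d)-m_\omega|<\varepsilon\) for all \(\omega\) once \(p\ge p_0\). The obstacle is the cross term \(\mathbb E_d X_\omega(d)m_\omega\), which is fine, together with \(\mathbb E_d X_\omega^2\), which requires the \emph{joint} equidistribution of \((x,x+P_i(d),y,y+P_i(d))\) in \((x,y,d)\).

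I would prove this joint equidistribution by viewing the configuration \(x,y,x+P_i(d),y+P_i(d)\) as a polynomial progression in two "base" variables with linearly independent polynomial shifts. I would then run Peluse's PET/degree-lowering argument, as in her proof of the count above, to show that the counting operator is controlled by \(\|f\|_{U^1}\)-type quantities. In \(\mathbb F_p\) these quantities vanish for \(f=1_A-\alpha\) up to \(O(p^{-c})\).

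If this joint control fails, which can happen when the shift structure is degenerate, the fallback is to retreat to a positivity statement. The approach there is to find \(d\) via averaging only against the one-sided bound \(X_\omega\ge m_\omega-\varepsilon\). This would use a weighted average \(\mathbb E_d w(d)X_\omega(d)\) with a common weight \(w\) (for instance a Bohr-set-type weight adapted to all the \(P_i\)), together with \(\sum_\omega\mathbb E_d w(d)(m_\omega-X_\omega(d))_+\) being small. The cost of either route is the error \(O(p^{-c})\), which is absorbed by choosing \(p_0(\varepsilon,\mathcal P)\) large.
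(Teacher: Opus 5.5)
Your main route (Step 2) rests on a false claim. The bound $\mathbb{E}_d|X_\omega(d)-m_\omega|^2=O(p^{-c'})$ already fails for $k=1$, $P_1(t)=t$ and $A$ an interval of density $1/2$. There $X(d)\approx \frac12-|d|/p$, so $\mathbb{E}_dX(d)^2=\sum_\xi|\widehat{1_A}(\xi)|^4\approx\frac1{12}$ while $m^2=\frac1{16}$, and the variance is about $\frac1{48}$. This is not a degenerate-shift phenomenon. Your own Step 1 computation shows that, for every $\omega\neq0$, the variance equals $\mathbb{E}_h(\mathbb{E}g_h)^{1+|\omega|}-\alpha^{2(1+|\omega|)}+O(p^{-c})$. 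By the power-mean inequality this is at least $\bigl(\alpha^4+\sum_{\xi\neq0}|\widehat{1_A}(\xi)|^4\bigr)^{(1+|\omega|)/2}-\alpha^{2(1+|\omega|)}$, a positive constant whenever $1_A$ has a large nonzero Fourier coefficient. The ``joint equidistribution'' of $(x,y,x+P_i(d),y+P_i(d))$ that you hope to get from PET cannot hold, because of the linear relation $(x+P_i(d))-x=(y+P_i(d))-y$. This is a parallelogram-type pattern, controlled by $U^2$ and not by $U^1$. So no two-sided concentration argument can work: for structured $A$ the counts $X_\omega(d)$ genuinely fluctuate in $d$, and the theorem is one-sided for that reason.

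Your fallback names a Bohr-set weight but supplies none of the mechanism, and that mechanism is the actual proof. The missing ingredients are these.
\begin{itemize}
\item Reread your Step 1 identity as $U^2$-control: $\mathbb{E}_d|I_{\mathcal P_\omega}(f_0,\dots,f_k)(d)|^2\le\min_i\|\widehat{f_i}\|_{\ell^\infty}+O(p^{-C})$. This follows from Cauchy--Schwarz in $h$ and $\|\widehat f\|_{\ell^4}^4\le\|\widehat f\|_{\ell^\infty}^2$.
\item Use an arithmetic regularity decomposition $1_A=f_{\mathrm{str}}+f_{\mathrm{psr}}+f_{\mathrm{sml}}$. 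Here $f_{\mathrm{str}}\ge0$ has mean $\alpha$ and is almost invariant under $B=B(\Gamma,\rho_1)$, $\|\widehat{f_{\mathrm{psr}}}\|_{\ell^\infty}\le\rho_2$, and $\|f_{\mathrm{sml}}\|_{L^2}\le\epsilon$.
\item Whenever all $P_i(d)\in B$, the structured contribution is $\mathbb{E}f_{\mathrm{str}}^{1+|\omega|}+O(\epsilon)\ge\alpha^{1+|\omega|}-O(\epsilon)$ by Jensen. This is where the one-sided bound actually comes from.
\item The density of such $d$ is $\gtrsim\rho_1^{k|\Gamma|}$. This uses Fourier expansion, the Weil bound (linear independence makes $\sum_i\xi_iP_i$ nonconstant), and $\|\widehat{1_B}\|_{\ell^1}\lesssim_{|\Gamma|}(\log p)^{|\Gamma|}$.
\item Choose the growth functions so that $\rho_2\ll\epsilon^2\rho_1^{k|\Gamma|}$. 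Then the Markov exceptional set for the pseudorandom terms, of density $O(\epsilon^{-2}(\rho_2+p^{-C}))$, cannot swallow the good Bohr parameters.
\end{itemize}
Without the Jensen step and this density comparison, nothing in your sketch prevents the bad $d$ from concentrating exactly on the support of your weight.
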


Thus, the same parameter \(d\) works not only for the full configuration, but for every subconfiguration obtained by selecting an arbitrary subset of the polynomial shifts \(P_i(d)\).  In particular, taking \(\omega_i=1\) for all \(i\) gives
\[
    I_{\mathcal P}(1_A,\dots,1_A)(d)
    \geq
    \alpha^{k+1}-\varepsilon,
\]
while taking \(\omega\) supported on a single index gives the Khintchine-type lower bound
\[
    \mathbb E_{x\in\mathbb F_p}
    1_A(x)1_A(x+P_i(d))
    \geq
    \alpha^2-\varepsilon
\]
simultaneously for all \(1\leq i\leq k\).

The second purpose of this paper is to show that simultaneous recurrence phenomena of this type have sharp limitations.  We focus on the popular common difference problem for arithmetic progressions over the group $\mathbb{F}_p^n$, where we think of $p$ as being fixed and $n$ tending to infinity.  Given \(f:\mathbb F_p^n\to\mathbb R\), define, for \(d\in\mathbb F_p^n\setminus\{0\}\),
\[
    I_3(f)(d)
    :=
    \mathbb E_{x\in\mathbb F_p^n}
    f(x)f(x+d)f(x+2d).
\]
When \(f=1_A\), this quantity counts the density of three-term arithmetic progressions in \(A\) with common difference \(d\). The popular common difference problem asks whether, for every \( \varepsilon>0\) and for $n$ sufficiently large in terms of  $\varepsilon$, one can find, in every $A \subseteq \mathbb{F}_p^n$ with $\alpha:=\mathbb E 1_A$, a nonzero \(d\) for which the density of three-term progressions in $A$ with common difference \(d\) is within $\varepsilon$ of the random lower bound, that is, 
\[
    I_3(1_A)(d)
    \geq
    \alpha^3-\varepsilon.
\]
This problem was raised by Bergelson, Host, and Kra \cite{BHK05}, who proved weaker related results using ergodic methods, and solved positively, over every finite abelian group, by Green~\cite{G05} using his arithmetic regularity lemma. His result was later extended to four-term progressions by Green and Tao~\cite{GT10}, while an example due to Ruzsa, given in an appendix to~\cite{BHK05}, shows that there is no similar phenomenon for longer progressions. 

Our Theorem~\ref{thm_simultaneous polynomial common difference} may be viewed as a variant of Green's result, showing that over $\mathbb{F}_p$ one can find popular differences for longer polynomial progressions, even simultaneously for a collection of distinct progressions, provided the polynomials are linearly independent. This raises the question of whether some simultaneous popular difference phenomenon might be possible for arithmetic progressions. For instance, can one always find a nonzero $d$ for which both $d$ and $2d$ are popular differences for three-term progressions? Our second result shows that this is not the case over $\mathbb{F}_p^n$ for $p$ fixed and $n$ tending to infinity.

\begin{theorem}
\label{thm:k>2}
For any sufficiently large prime $p$, there exists an absolute constant $c > 0$ such that, for all sufficiently large $n$, there exists a set $A\subseteq\mathbb F_p^n$ with density $\frac12+o_n(1)$ such that
\[
    \max_{d\neq 0}
    \min\bigl\{
        I_3(1_A)(d),
        I_3(1_A)(2d)
    \bigr\}
    \leq
    \frac18-c.
\]
\end{theorem}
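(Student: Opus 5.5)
The plan is to build $A$ from a nondegenerate quadratic form, which reduces the problem to a one-dimensional problem on $\FF_p$, and then to choose a one-variable profile in that reduced problem. Fix $m\geq 1$ and let $q:\FF_p^n\to\FF_p$ be a nondegenerate quadratic form with associated bilinear form $b$, so $q(x+d)=q(x)+b(x,d)+q(d)$. I would take a profile $F=\tfrac12+g:\FF_p\to[0,1]$ with $\EE g=0$ and set
\[
A=\{x\in\FF_p^n:\ x \text{ is selected with probability } F(q(x))\}.
\]
Equivalently, one can take $A=\{x: q(x)\in S\}$ for a suitable set $S$, using a finite-field analogue of a dyadic level-set construction. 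The density of $A$ is then $\tfrac12+o_n(1)$, because $q(x)$ is equidistributed on $\FF_p$ up to an error $O(p^{-n/2})$.

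\textbf{Step 1: reduction to $\FF_p$.} For $d\neq 0$ the linear form $x\mapsto b(x,d)$ is nonzero by nondegeneracy. Standard Gauss-sum estimates then show that the pair $(q(x),b(x,d))$ is equidistributed on $\FF_p^2$ up to an error $o_n(1)$ that is uniform in $d$. Writing $u=q(x)$ and $v=q(x+d)$, one has $q(x+2d)=2v-u+2q(d)$. Hence, uniformly in $d\neq0$,
\[
I_3(1_A)(d)=T(q(d))+o_n(1),\qquad T(r):=\EE_{u,v\in\FF_p}F(u)F(v)F(2v-u+2r),
\]
and $I_3(1_A)(2d)=T(4q(d))+o_n(1)$. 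If $A$ is taken random, a concentration argument removes the randomness. It therefore suffices to find $F$ with mean $\tfrac12$ satisfying
\[
\min\{T(r),T(4r)\}\leq \tfrac18-2c \quad\text{for every } r\in\FF_p .
\]

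\textbf{Step 2: expansion of $T$.} Substitute $F=\tfrac12+g$ with $\EE g=0$ and expand. The linear terms vanish, and so do the quadratic terms, since each contains a free variable: for instance $\EE_{u,v}g(u)g(2v-u+2r)=0$ because $v$ is free. This leaves
\[
T(r)=\tfrac18+\EE_{u,v}g(u)g(v)g(2v-u+2r)=\tfrac18+\sum_{\xi}\widehat g(\xi)^2\,\widehat g(-2\xi)\,e_p(c\,r\xi)
\]
for an appropriate constant $c$ (up to sign conventions). Call the cubic term $\psi(r)$. The problem is now to design $g$, with values in $[-\tfrac12,\tfrac12]$, so that for every $r$ at least one of $\psi(r)$ and $\psi(4r)$ is at most $-2c$. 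Note that $\EE_r\psi=\widehat g(0)^3=0$, so $\psi$ must be large and positive somewhere. This is harmless as long as the set where $\psi>-2c$ meets no pair $\{r,4r\}$.

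\textbf{Step 3: choice of the target set.} Choose an interval $J\subset\TT$ with $J\cap 4^{-1}J=\emptyset$, for example $J=[0.4,0.6]$. If $x\in J$ then $4x\in[0.6,1]\cup[0,0.4]$ mod $1$, which is disjoint from $J$. Consequently, for every $x$, either $x\notin J$ or $4x\notin J$. I would aim for $\psi(r)\approx \Psi(r/p)$, where $\Psi$ is a fixed smooth mean-zero function on $\TT$ that is at most $-\eta$ off $J$ and positive on $J$. The value $r=0$ is covered automatically, since $0\notin J$.

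\textbf{Main obstacle.} The difficulty is realizing such a $\Psi$ through coefficients of the constrained form $\widehat g(\xi)^2\widehat g(-2\xi)$, with $g$ real and bounded. I would first try placing $\widehat g$ on disjoint short chains $\{\pm a_j,\mp 2a_j\}$ with real symmetric values $s_j,t_j$. Each chain contributes a term $s_j^2t_j\cos(2\pi c a_jr/p)$ (plus cross terms, which one must check are absent or controlled), with essentially freely chosen sign and size. Choosing the $a_j$ as the first few integers, with the chains arranged so that they do not interact, should approximate the Fourier series of $\Psi$ to within $\eta/2$. Scaling all coefficients down then makes $\|g\|_\infty\leq\tfrac12$, and $c$ comes out of size $\eta\cdot(\text{scale})^3$, independent of $n$.

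If the chains cannot be kept from interacting, the fallback is to let $F$ depend on two independent quadratic forms $(q_1,q_2)$. This gives a two-dimensional profile and more freedom in the cubic term. A single cosine does \emph{not} suffice: the set $\{x:\cos x\leq 0,\ \cos 4x\leq 0\}$ is nonempty. This is why a genuine multi-frequency construction is needed.
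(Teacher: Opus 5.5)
Your reduction is the same as the paper's route. You use a quadratic factor, Gauss-sum equidistribution of $(q(x),b(x,d))$, random sampling, and the identity $T(r)=\tfrac18+\sum_{\xi}\widehat g(\xi)^2\widehat g(-2\xi)e_p(2r\xi)$; your $T(r)$ is the paper's $J_3(2r)$. You then need a target region on $\TT$ disjoint from its preimage under $y\mapsto 4y$. (Minor: the closed $J=[0.4,0.6]$ fails at the endpoints, since $4\cdot 0.4\equiv 0.6$. What you need is $\{\Psi>-\eta\}\subseteq(0.4,0.6)$ with $\Psi$ even, which is easy to arrange.)

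The genuine gap is the step you yourself call the main obstacle, and it is the heart of the construction. You must produce $g$ whose cubic term $\sum_{\xi\neq 0}\widehat g(\xi)^2\widehat g(-2\xi)e_p(\xi s)$ is a prescribed cosine polynomial, and your plan for this is inconsistent.

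\begin{itemize}
\item If the $a_j$ are the first few integers, the chains $\{a_j,2a_j\}$ cannot be disjoint. The chain of $1$ ends at $2$, where the chain of $2$ begins, so $\widehat g(\pm 2)$ enters both $\widehat g(1)^2\widehat g(-2)$ and $\widehat g(2)^2\widehat g(-4)$.
\item If you force disjointness in the natural way, say with all $a_j$ odd, there really are no extra terms, since $\pm 4a_j$ lies outside the support. But then the cubic term is $\Phi(2r/p)$ for a cosine polynomial $\Phi$ with only odd frequencies. On the torus, which is what you target, $\Phi(y+\tfrac12)=-\Phi(y)$ while $\Phi(4y+2)=\Phi(4y)$. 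Applying your requirement at $y$ and at $y+\tfrac12$ forces $\Phi(4y)\le -\eta$ for every $y$. Hence $\Phi\le-\eta$ everywhere, contradicting mean zero.
\item The two-quadratic-form fallback does not remove the difficulty. The cubic term is again $\widehat F(\xi)^2\widehat F(-2\xi)$, now on $\FF_p^2$, and $d\mapsto 2d$ still multiplies the parameter by $4$.
\end{itemize}

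The missing idea is that interaction is harmless, because the equations are triangular along doubling chains; this is the paper's Lemma~\ref{lma_assign value}.

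\begin{itemize}
\item Approximate $\Psi$ uniformly by $P(y)=\sum_{n=1}^N a_n\cos(2\pi n y)$ with every $a_n\neq 0$. A small perturbation, as in Corollary~\ref{cor_special function}, arranges this.
\item Set $\widehat g(\pm k)=x_k\in\RR$ for $1\le k\le 2N$ and $\widehat g=0$ elsewhere. For $p>10N$ the cubic coefficient at $\pm n$ is $x_n^2x_{2n}$ when $n\le N$. It vanishes automatically for $N<|\xi|\le 2N$, because $\widehat g(-2\xi)=0$. So you need exactly $x_n^2x_{2n}=b_n:=a_n/2$ for $1\le n\le N$.
\item On each chain $m,2m,\dots,2^km\le N<2^{k+1}m$ with $m$ odd, choose the top value $x_{2^{k+1}m}\neq 0$ freely, with the sign of $b_{2^km}$.
\item Solve downwards by $|x_{2^jm}|=(|b_{2^jm}|/|x_{2^{j+1}m}|)^{1/2}$, giving $x_{2^jm}$ the sign of $b_{2^{j-1}m}$ when $j\ge 1$. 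The signs matter. Taking positive square roots throughout, as the paper's write-up literally does, fails whenever $b_1<0$ and $N\ge 2$: it forces $x_2>0$, whereas $x_1^2x_2=b_1<0$ needs $x_2<0$.
\end{itemize}

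This realizes $P$ exactly. Rescaling $g$ so that $\|g\|_\infty\le\tfrac12$, using $\|g\|_\infty\le\sum_\xi|\widehat g(\xi)|$, then gives $c$ independent of both $n$ and $p$.
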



That is, for such an $A$, at least one of the two common differences $d$ and $2d$ fails to be popular for every nonzero $d$. For any other fixed natural number $k \ge 3$, a similar construction yields examples where $d$ and $kd$ are not both popular common differences for any nonzero $d$. 

The rest of the paper is organized as follows.  In Section~\ref{sec:preliminaries}, we fix some notation and conventions and record some basic facts that will be used throughout.  In Sections~\ref{sec:positive-main-proof} and~\ref{sec:negative-ap}, we prove Theorems~\ref{thm_simultaneous polynomial common difference} and~\ref{thm:k>2}, respectively. We conclude with some further remarks and open problems. 

\section{Preliminaries} \label{sec:preliminaries}

\subsection{Notation and conventions}

Throughout the paper, \(p\) will denote an odd prime.  We write \(\mathbb F_p\) for the finite field with \(p\) elements and \(\mathbb F_p^n\) for the \(n\)-dimensional vector space over \(\mathbb F_p\).  We identify \(\mathbb F_p\) with the residue classes
\[
    \{0,1,\dots,p-1\}
\]
when discussing representatives modulo \(p\).

For a finite set \(X\), we write
\[
    \mathbb E_{x\in X} f(x)
    :=
    \frac{1}{|X|}\sum_{x\in X} f(x).
\]
When the underlying set is clear, we simply write \(\mathbb E_x f(x)\). 

We write
\[
    e_p(t):=\exp(2\pi i t/p)
\]
for the standard additive character on \(\mathbb F_p\).  In \(\mathbb F_p^n\), we use the dot product
\[
    \langle x,\xi\rangle
    :=
    x_1\xi_1+\cdots+x_n\xi_n,
    \qquad
    x,\xi\in\mathbb F_p^n.
\]
For \(f:\mathbb F_p^n\to\mathbb C\), we then have the Fourier transform
\[
    \widehat f(\xi)
    :=
    \mathbb E_{x\in\mathbb F_p^n}
    f(x)e_p(-\langle x,\xi\rangle),
    \qquad
    \xi\in\mathbb F_p^n.
\]
In the special case where \(n=1\), this becomes
\[
    \widehat f(\xi)
    =
    \mathbb E_{x\in\mathbb F_p}
    f(x)e_p(-\xi x).
\]
With this normalization, Fourier inversion is given by
\[
    f(x)
    =
    \sum_{\xi\in\mathbb F_p^n}
    \widehat f(\xi)e_p(\langle x,\xi\rangle)
\]
and Plancherel's identity is
\[
    \mathbb E_{x\in\mathbb F_p^n}|f(x)|^2
    =
    \sum_{\xi\in\mathbb F_p^n}|\widehat f(\xi)|^2.
\]

For \(1\le q<\infty\), we define
\[
    \|f\|_{L^q(\mathbb F_p^n)}
    :=
    \left(
        \mathbb E_{x\in\mathbb F_p^n}|f(x)|^q
    \right)^{1/q},
\]
with the usual interpretation when \(q=\infty\).  For a function
\(F:\mathbb F_p^n\to\mathbb C\) on the frequency side, we write
\[
    \|F\|_{\ell^q(\mathbb F_p^n)}
    :=
    \left(
        \sum_{\xi\in\mathbb F_p^n}|F(\xi)|^q
    \right)^{1/q}.
\]
In particular,
\[
    \|\widehat f\|_{\ell^\infty(\mathbb F_p^n)}
    =
    \max_{\xi\in\mathbb F_p^n}|\widehat f(\xi)|.
\]

For functions \(F,G:\mathbb F_p^n\to\mathbb C\) on the frequency side, we use the unnormalized convolution
\[
    (F*G)(\xi)
    :=
    \sum_{\eta\in\mathbb F_p^n}F(\eta)G(\xi-\eta).
\]
With our Fourier normalization, one has
\[
    \widehat{fg}
    =
    \widehat f*\widehat g.
\]
Consequently,
\[
    \|\widehat{fg}\|_{\ell^1}
    \leq
    \|\widehat f\|_{\ell^1}\|\widehat g\|_{\ell^1}.
\]

We say that \(f\) is \(1\)-bounded if
\[
    |f(x)|\leq 1
\]
for every $x$. 

We write \(1_A\) for the indicator function of a set \(A\).  The density of a set \(A\subseteq G\), where \(G\) is a finite abelian group, is
\[
    \alpha
    :=
    \mathbb E_{x\in G}1_A(x)
    =
    \frac{|A|}{|G|}.
\]

For $x,h \in \FF_p^n$, we define the multiplicative derivative by
\[
\triangle_{h}f(x):=f(x)\overline{f}(x+h).
\]

For \(t\in\mathbb R/\mathbb Z \cong \TT\), we write
\[
    \|t\|_{\TT}
\]
for the distance from \(t\) to the nearest integer.  

Finally, we use the notation \(X\lesssim Y\) to mean that \(X\leq CY\) for an absolute constant \(C>0\).  If the implicit constant depends on additional parameters, we indicate this by subscripts; for example,
\[
    X\lesssim_{\mathcal P,r}Y
\]
means that the implicit constant may depend on \(\mathcal P\) and \(r\), but not on \(p\) or \(n\).  Similarly, \(O_{\mathcal P}(Y)\) denotes a quantity bounded in magnitude by \(C_{\mathcal P}Y\).

\subsection{Weil bound for polynomial exponential sums}

We will use the following standard form of the Weil bound for additive character sums over finite fields (see, e.g., \cite[Section 3]{Kow}).  It says that a nonconstant polynomial phase exhibits square-root cancellation provided that the degree is smaller than the characteristic.  In our applications, the degree will be fixed while \(p\) tends to infinity.

\begin{lemma}[Weil bound] \label{WBound}
Let \(f\in\mathbb F_p[x]\) be a polynomial of degree \(d\) with $0<d<p$. Then
\[
    \left|
        \mathbb E_{x\in\mathbb F_p} e_p(f(x))
    \right|
    \lesssim_d
    p^{-1/2}.
\]
\end{lemma}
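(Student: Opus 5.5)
The statement to prove is the Weil bound, Lemma~\ref{WBound}. It is a classical result, and I would quote it rather than reprove it. Still, here is the route I would take for a self-contained argument.

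\textbf{Step 1: reduce to counting points on a curve.} Write $S=\sum_{x\in\mathbb F_p}e_p(f(x))$. The sum $S$ is governed by the Artin--Schreier curve $C_f\colon y^p-y=f(x)$. Since $d<p$, the polynomial $y^p-y-f(x)$ is absolutely irreducible. For each $x$, the number of solutions $y\in\mathbb F_p$ equals $\sum_{a\in\mathbb F_p}e_p(af(x))$. Hence
\[
\#C_f(\mathbb F_p)=p+\sum_{a\neq 0}S_a, \qquad S_a:=\sum_{x\in\mathbb F_p}e_p(af(x)).
\]
A cleaner version of the same idea uses the $L$-function
\[
L(T)=\exp\Bigl(\sum_{m\ge1}S_m T^m/m\Bigr),
\]
where $S_m$ is the analogous sum over $\mathbb F_{p^m}$ with the character $e_p\circ\mathrm{Tr}$. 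Using Weil's theorem that this $L$-function is a polynomial of degree $d-1$, we can write $S_m=-\sum_{j=1}^{d-1}\beta_j^m$.

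\textbf{Step 2: show every root has absolute value $\sqrt p$.} This is the Riemann hypothesis for curves, and it is the main obstacle. I would prove it by Stepanov's polynomial method. First, construct an auxiliary polynomial that vanishes to high order at every point of $C_f(\mathbb F_{p^m})$; this gives
\[
\#C_f(\mathbb F_{p^m})\le p^m+O_d(p^{m/2})
\]
for $m$ even and $p^m$ large. Second, apply the same argument to twists to get a matching lower bound. Third, use the rationality of $L(T)$ to bootstrap $|S_m|\lesssim_d p^{m/2}$ for all large $m$ into $|\beta_j|\le\sqrt p$. This works because $\sum_j\beta_j^m$ cannot be $O(p^{m/2})$ for all $m$ if some $|\beta_j|>\sqrt p$.

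\textbf{Step 3: conclude.} Taking $m=1$ gives $|S|\le (d-1)\sqrt p$. Dividing by $p$ yields
\[
\left|\mathbb E_{x\in\mathbb F_p}e_p(f(x))\right|\le (d-1)p^{-1/2}\lesssim_d p^{-1/2}.
\]

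In the paper itself, I would simply cite \cite[Section 3]{Kow} for the whole statement. The Stepanov step in Step 2 is genuinely long and is not needed for the arguments that follow.
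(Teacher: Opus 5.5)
Your plan agrees with the paper, which gives no proof of this lemma and simply cites \cite[Section 3]{Kow}, exactly as you propose. One correction to your optional sketch: the curve $y^p-y=f(x)$ has genus $(p-1)(d-1)/2$, so the point-count error is of order $(p-1)(d-1)p^{m/2}$, i.e.\ $O_{d,p}(p^{m/2})$ and not $O_d(p^{m/2})$. For instance, $y^p-y=x^2$ is maximal over $\mathbb F_{p^2}$ when $p\equiv 3 \pmod 4$. Moreover, the point count controls $\sum_{a\in\mathbb F_p^{*}}\sum_{x\in\mathbb F_{p^m}}e_p(\mathrm{Tr}(af(x)))$ rather than $S_m$ itself. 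Both issues are harmless. The bootstrap only uses $m\to\infty$ with $p$ fixed; you should run it on the $L$-functions of all $af$ with $a\neq 0$ at once, whose reciprocal roots enter with positive multiplicity and so cannot cancel. The uniformity in $p$ then comes entirely from $\deg L=d-1$.
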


\subsection{Bohr sets}

Though \(\mathbb F_p\) has no nontrivial proper subgroups, there are subsets, known as Bohr sets, which may stand in as approximate subgroups. These Bohr sets play an important role in additive combinatorics and we will make use of them here. 

\begin{defn}[Bohr sets]
Let \(\Gamma\subseteq \mathbb F_p\setminus\{0\}\) be a set of frequencies and let \(\rho\in(0,1/2]\).  Define the 
Bohr set $B=B(\Gamma,\rho)$ by 
\[
    B(\Gamma,\rho)
    :=
    \Bigl\{
        x\in\mathbb F_p:
        \bigl\|\tfrac{\xi x}{p}\bigr\|_{\TT}\leq \rho
        \text{ for every }\xi\in\Gamma
    \Bigr\}.
\]
The rank of \(B\) is then \(|\Gamma|\).
\end{defn}

We record two useful estimates regarding Bohr sets.

\begin{lemma}[Lemma 4.20 in \cite{TV06}] \label{lma_B_size}
Let \(B=B(\Gamma,\rho)\subseteq\mathbb F_p\) be a Bohr set.  Then
\[
    |B|\geq \rho^{|\Gamma|}|\mathbb F_p|.
\]
\end{lemma}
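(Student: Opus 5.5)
The plan is to use a pigeonhole (averaging) argument on the torus. Write $\Gamma=\{\xi_1,\dots,\xi_r\}$ with $r=|\Gamma|$, and consider the map
\[
    \phi:\mathbb F_p\to\TT^r,\qquad \phi(x):=\Bigl(\tfrac{\xi_1x}{p},\dots,\tfrac{\xi_rx}{p}\Bigr)\bmod 1.
\]
The goal is to find a box of side $\rho$ in $\TT^r$ containing the images of at least $\rho^r p$ points, and then to translate that set of points into $B$.

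For $t\in\TT^r$, let $Q_t:=t+[0,\rho)^r$. Since $\rho\le 1/2<1$, this is a genuine box in the torus with Haar measure exactly $\rho^r$. Set $S_t:=\{x\in\mathbb F_p:\phi(x)\in Q_t\}$. For each fixed $x$, the probability over uniform $t\in\TT^r$ that $\phi(x)\in Q_t$ equals $\mathrm{vol}(Q_0)=\rho^r$. By linearity of expectation,
\[
    \mathbb E_t|S_t|=\rho^r p.
\]
Hence there is some $t$ with $|S_t|\ge\rho^r p$. In particular $S_t$ is nonempty (and if $\rho^r p<1$ the claim is trivial anyway, since $0\in B$).

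Now fix $x_0\in S_t$. For any $x\in S_t$ and each $i$, both $\xi_ix/p$ and $\xi_ix_0/p$ lie in the same interval $t_i+[0,\rho)$ modulo $1$. Their difference $\xi_i(x-x_0)/p$ is therefore congruent modulo $1$ to a real number in $(-\rho,\rho)$, so $\|\xi_i(x-x_0)/p\|_{\TT}<\rho$. Thus $S_t-x_0\subseteq B(\Gamma,\rho)$, and since translation is injective,
\[
    |B(\Gamma,\rho)|\ge|S_t|\ge\rho^{|\Gamma|}p.
\]

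There is no real obstacle here. The only points needing care are that the box $Q_t$ does not wrap onto itself (guaranteed by $\rho\le1/2$), and that averaging over a continuous translate $t$, rather than partitioning into $\lceil1/\rho\rceil^r$ cells, gives the sharp constant $\rho^r$ instead of a lossy one.
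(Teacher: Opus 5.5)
Your averaging argument is correct and is essentially the standard proof of Lemma 4.20 in Tao--Vu, which the paper cites rather than reproves: a random translate of the half-open box $[0,\rho)^{|\Gamma|}$ in $\mathbb{T}^{|\Gamma|}$ captures, for some translate, a set $S_t$ of at least $\rho^{|\Gamma|}p$ points, and $S_t-S_t\subseteq B(\Gamma,\rho)$. Your bound $\|\xi_i(x-x_0)/p\|_{\mathbb{T}}<\rho$ is strict, which is slightly stronger than the paper's closed condition $\le\rho$ requires, so the inclusion into $B(\Gamma,\rho)$ holds as claimed.
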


\begin{lemma} \label{lma:l^1 for Bohr}
Let \(B=B(\Gamma,\rho)\) be a Bohr set of rank \(r>0\).  Then
\[
    \sum_{\xi\in\mathbb F_p}
    \bigl|\widehat{1_B}(\xi)\bigr|
    \lesssim_r
    (\log p)^r.
\]
\end{lemma}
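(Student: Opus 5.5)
We bound the $\ell^1$ norm of $\widehat{1_B}$ by dominating $1_B$ from above and below by smooth functions whose Fourier transforms are supported on the lattice generated by $\Gamma$.

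Write $\Gamma=\{\xi_1,\dots,\xi_r\}$. Membership in $B$ depends only on the vector $(\xi_1x/p,\dots,\xi_rx/p)\in\TT^r$: we have $1_B(x)=\prod_{j=1}^r 1_{[-\rho,\rho]}(\xi_j x/p \bmod 1)$. Expand each factor $g=1_{[-\rho,\rho]}$ on $\TT$ in its Fourier series, truncated at level $M$:
\[
g(t)=\sum_{|m|\le M} c_m e(mt) + E_M(t).
\]
Here $|c_m|\lesssim \min(1,1/|m|)$, so $\sum_{|m|\le M}|c_m|\lesssim \log M$.

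Rather than handling the truncation error directly, it is cleaner to use Dirichlet-type or Fejér/Vaaler majorants. The difficulty is that $\ell^1$ norms must be exact for $1_B$ itself, not for an approximant. So instead compute $\widehat{1_B}$ directly using discreteness. Since $x\mapsto \xi_jx/p$ takes values in $\frac1p\ZZ/\ZZ$, the function $g$ restricted to $\frac1p\ZZ/\ZZ$ is the indicator of a discrete interval $J\subseteq \ZZ/p\ZZ$. Its discrete Fourier coefficients satisfy
\[
|\widehat{1_J}(m)|\lesssim \min\Bigl(1,\frac{1}{p\|m/p\|_{\TT}}\Bigr),
\]
so that $\sum_{m\in\FF_p}|\widehat{1_J}(m)|\lesssim \log p$. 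This is the standard Dirichlet-kernel bound.

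Then
\[
1_B(x)=\prod_{j=1}^r 1_J(\xi_j x)=\sum_{m_1,\dots,m_r\in\FF_p}\prod_j \widehat{1_J}(m_j)\, e_p\Bigl(x\sum_j m_j\xi_j\Bigr).
\]
Hence $\widehat{1_B}(\xi)=\sum_{\sum m_j\xi_j=\xi}\prod_j\widehat{1_J}(m_j)$. Equivalently, each factor $x\mapsto 1_J(\xi_jx)$ has Fourier transform $\widehat{1_J}(\cdot\,\xi_j^{-1})$, a permutation of the coefficients of $1_J$, with $\ell^1$ norm $\lesssim\log p$. Applying the triangle inequality, or the preliminary inequality $\|\widehat{fg}\|_{\ell^1}\le\|\widehat f\|_{\ell^1}\|\widehat g\|_{\ell^1}$ iterated $r-1$ times, gives
\[
\sum_\xi|\widehat{1_B}(\xi)|\le \prod_{j=1}^r\bigl\|\widehat{1_J}\bigr\|_{\ell^1}\lesssim_r(\log p)^r.
\]

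The only real step is the one-dimensional bound $\|\widehat{1_J}\|_{\ell^1}\lesssim\log p$ for an interval $J$ in $\ZZ/p\ZZ$. It follows from the geometric series bound
\[
|\widehat{1_J}(m)|=\frac1p\Bigl|\frac{\sin(\pi m|J|/p)}{\sin(\pi m/p)}\Bigr|\le \frac{1}{2p\|m/p\|_{\TT}}\quad (m\neq0),
\]
combined with the trivial bound $|\widehat{1_J}(0)|\le1$. Summing over $m$ yields the harmonic sum $\sum_{1\le |m|\le p/2} 1/(2|m|)\lesssim\log p$. One small point to check is that the interval is $J=\{a: \|a/p\|_\TT\le\rho\}$, which is a symmetric interval of integers around $0$, so the formula applies. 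Nothing is lost for small $\rho$, since the bound is uniform in $|J|$.
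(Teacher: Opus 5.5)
Your proof is correct and is essentially the paper's argument: write $1_B$ as a product of the dilated interval indicators $x\mapsto 1_J(\xi_j x)$, bound $\|\widehat{1_J}\|_{\ell^1}\lesssim\log p$ with the geometric-series (Dirichlet kernel) estimate, observe that dilation only permutes the frequencies, and iterate $\|\widehat{fg}\|_{\ell^1}\le\|\widehat f\|_{\ell^1}\|\widehat g\|_{\ell^1}$. The opening digression about truncated Fourier series and Fejér/Vaaler majorants is never used and can simply be deleted.
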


\begin{proof}
For \(\rho\in(0,1/2]\), set
\[
    I_\rho
    :=
    \Bigl\{
        t\in\mathbb F_p:
        \bigl\|\tfrac{t}{p}\bigr\|_{\mathbb T}\leq \rho
    \Bigr\}.
\]
Identifying \(\mathbb F_p\) with \(\{0,1,\dots,p-1\}\), the condition
\[
    \left\|\frac{t}{p}\right\|_{\mathbb T}\leq \rho
\]
is equivalent to \(t\equiv s \pmod p\) for some integer \(s\) with \(|s|\leq M\), where
\[
    M:=\lfloor \rho p\rfloor.
\]
Thus,
\[
    I_\rho=\{-M,-M+1,\dots,M\}\pmod p,
\]
a symmetric interval modulo \(p\).

We first prove the desired \(\ell^1\)-Fourier bound for intervals.  Let
\[
    I=\{0,1,\dots,m-1\}\subseteq\mathbb F_p
\]
be an interval of length \(m = 2M+1\).  The symmetric interval case is equivalent up to translation, which does not affect the absolute values of the Fourier coefficients.  For \(\xi\neq 0\), we compute
\[
    \widehat{1_I}(\xi)
    =
    \mathbb E_x 1_I(x)e_p(-\xi x)
    =
    \frac1p\sum_{x=0}^{m-1}e_p(-\xi x)
    =
    \frac1p\cdot
    \frac{1-e_p(-\xi m)}{1-e_p(-\xi)}.
\]
Using $|1-e_p(-\xi m)|\leq 2$ and the standard bound 
\[
    |1-e_p(-\xi)|
    \asymp
    \frac{|\xi|}{p}
\]
for $1\leq |\xi|\leq (p-1)/2$, where \(|\xi|\) denotes the least absolute residue of \(\xi\), we obtain
\[
    |\widehat{1_I}(\xi)|
    \lesssim
    \frac1p
    \min\left(m,\frac{p}{|\xi|}\right)
    =
    \min\left(\frac{m}{p},\frac1{|\xi|}\right).
\]
Therefore,
\[
    \sum_{\xi\in\mathbb F_p}|\widehat{1_I}(\xi)|
    \leq
    |\widehat{1_I}(0)|
    +
    2\sum_{\xi=1}^{(p-1)/2}
    |\widehat{1_I}(\xi)|
    \lesssim
    1+\sum_{\xi=1}^{p/2}
    \min\left(\frac{m}{p},\frac1{\xi}\right).
\]
Let \(R_0:=\lfloor p/m\rfloor\).  Then
\[
    \sum_{\xi=1}^{p/2}
    \min\left(\frac{m}{p},\frac1{\xi}\right)
    \leq
    \sum_{\xi\leq R_0}\frac{m}{p}
    +
    \sum_{\xi>R_0}\frac1{\xi}
    \lesssim
    1+\log p.
\]
Hence, 
\begin{equation*}
    \|\widehat{1_I}\|_{\ell^1(\mathbb F_p)}
    \lesssim
    \log p.
\end{equation*}
The same bound then also applies to $\|\widehat{1_{I_\rho}}\|_{\ell^1(\mathbb F_p)}$. 

Next, for each \(\xi\in\Gamma\), we have
\[
    x\in B(\Gamma,\rho)
    \quad\Longleftrightarrow\quad
    \xi x\in I_\rho
    \text{ for every }\xi\in\Gamma.
\]
Hence,
\begin{equation}\label{eq:indicator-factor}
    1_B(x)
    =
    \prod_{\xi\in\Gamma}1_{I_\rho}(\xi x)
    =
    \prod_{\xi\in\Gamma}1_{\xi^{-1}I_\rho}(x).
\end{equation}
Since \(\widehat{fg}=\widehat f*\widehat g\), we have
\[
    \|\widehat{fg}\|_{\ell^1}
    \leq
    \|\widehat f\|_{\ell^1}
    \|\widehat g\|_{\ell^1}.
\]
Iterating this inequality and using \eqref{eq:indicator-factor}, we obtain
\[
    \|\widehat{1_B}\|_{\ell^1(\mathbb F_p)}
    \leq
    \prod_{\xi\in\Gamma}
    \bigl\|
        \widehat{1_{\xi^{-1}I_\rho}}
    \bigr\|_{\ell^1(\mathbb F_p)}.
\]
Multiplication by a nonzero scalar \(\xi^{-1}\) only permutes the frequency side, so
\[
    \bigl\|
        \widehat{1_{\xi^{-1}I_\rho}}
    \bigr\|_{\ell^1(\mathbb F_p)}
    =
    \|\widehat{1_{I_\rho}}\|_{\ell^1(\mathbb F_p)}.
\]
Therefore,
\[
    \|\widehat{1_B}\|_{\ell^1(\mathbb F_p)}
    \leq
    \left(
        \|\widehat{1_{I_\rho}}\|_{\ell^1(\mathbb F_p)}
    \right)^r
    \lesssim_r
    (\log p)^r,
\]
completing the proof.
\end{proof}

\section{Proof of Theorem 
\ref{thm_simultaneous polynomial common difference}} \label{sec:positive-main-proof}

In this section we prove Theorem~\ref{thm_simultaneous polynomial common difference}.  The proof has three main ingredients.  First, we use Peluse's finite-field polynomial Szemerédi theorem to obtain a \(U^2\)-type control estimate for polynomial counting operators.  Second, we apply an arithmetic regularity decomposition to split \(1_A\) into structured, pseudorandom, and small components.  Third, we use the Bohr-set estimates from Section~\ref{sec:preliminaries} to find many parameters \(d\) for which all polynomial shifts \(P_i(d)\) lie in the Bohr set controlling the structured component.

We begin by introducing an averaged polynomial counting operator associated to a collection $\mathcal P=\{P_1,\dots,P_k\} \subset \mathbb{Z}[t]$. 
For functions \(f_0,\dots,f_k:\mathbb F_p\to\mathbb C\), define
\[
\Lambda_{\mathcal{P}}( f_0, \dots,f_{k})=
    \EE_{x,d \in \FF_p}f_0(x) \prod_{i=1}^k f_i( x+ P_i(d)).
\] 

The following theorem of Peluse shows that these polynomial configurations satisfy an asymptotic independence property when the polynomials are linearly independent.

\begin{theorem}[Polynomial Szemerédi theorem, \cite{P19}] \label{thm_polynomial Szemeredi}
    Let $\mathcal{P}= \{P_1, \dots, P_{k}\} \subset \ZZ[t]$ be a collection of linearly independent polynomials with zero constant terms. Then there exists $C=C(\mathcal{P})>0$ such that
    \[
    \Lambda_{\mathcal{P}}( f_0, \dots,f_{k})= \prod_{i=0}^k \EE f_i +O_{\mathcal{P}}(p^{-C})
    \]
    for any 1-bounded functions $f_0, \dots, f_k $.
\end{theorem}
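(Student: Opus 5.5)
The argument goes by induction on $k$. The base case $k=1$ is a direct Fourier computation. Write $\Lambda_{\mathcal P}(f_0,f_1)=\sum_\xi \widehat{f_0}(-\xi)\widehat{f_1}(\xi)\,\EE_d e_p(\xi P_1(d))$. The $\xi=0$ term is $\EE f_0\,\EE f_1$. For $\xi\neq0$, Lemma~\ref{WBound} bounds the inner average by $O_{\mathcal P}(p^{-1/2})$, once $p$ exceeds the degree and the leading coefficient of $P_1$. Cauchy--Schwarz and Plancherel then give the claim with $C=1/2$. For the inductive step it suffices to prove
\[
\Lambda_{\mathcal P}(f_0,\dots,f_k)=\Lambda_{\mathcal P}(f_0,\dots,f_{k-1},\EE f_k)+O_{\mathcal P}(p^{-c}).
\]
After reordering we may assume $P_k$ has maximal degree. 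The right-hand side equals $\EE f_k\cdot\Lambda_{\{P_1,\dots,P_{k-1}\}}(f_0,\dots,f_{k-1})$, and the induction hypothesis applies to it because subcollections of a linearly independent family stay linearly independent. Subtracting, it is enough to show that $|\Lambda_{\mathcal P}(f_0,\dots,f_k)|\le\delta$ whenever $\EE f_k=0$ (for instance), with $\delta=O(p^{-c})$.

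\textbf{Step 1 (Gowers-norm control via PET).} Apply van der Corput / Cauchy--Schwarz repeatedly, in the style of Bergelson--Leibman PET induction. Each differencing step replaces the polynomial family by one of lower weight. The terms where the resulting polynomials in $d$ degenerate are controlled by Lemma~\ref{WBound}, at a cost of $O(p^{-1/2})$ per step. The outcome is a bound $|\Lambda_{\mathcal P}(f_0,\dots,f_k)|^{2^{t}}\lesssim \|f_k\|_{U^s}+O(p^{-c})$ for some $s,t$ depending only on $\mathcal P$. This step is lengthy but standard; linear independence is not yet needed.

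\textbf{Step 2 (degree lowering).} Introduce the dual function
\[
F(x)=\EE_d f_0(x-P_k(d))\prod_{i<k} f_i(x-P_k(d)+P_i(d)),
\]
so that $\Lambda_{\mathcal P}(f_0,\dots,f_k)=\langle f_k,\overline F\rangle$. If $\Lambda$ is large, then $\|F\|_2^2$ is large, and $\|F\|_2^2$ is itself a $\Lambda$-type average with $F$ in the last slot. Step 1 then shows $\|F\|_{U^s}$ is large. The key claim is that if $\|F\|_{U^s}\ge\delta$ with $s\ge2$, then $\|F\|_{U^{s-1}}\ge\delta^{O(1)}$, up to $p^{-c}$ losses.

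To prove the claim, write $\|F\|_{U^s}^{2^s}=\EE_{h_1,\dots,h_{s-2}}\|\Delta_{h_1,\dots,h_{s-2}}F\|_{U^2}^4$. Apply the $U^2$ inverse theorem (a Fourier-coefficient argument) to find, for many $h$, a frequency $\phi(h)$ with $|\widehat{\Delta_h F}(\phi(h))|$ large. Then substitute the definition of $F$ back into this correlation. Expanding, we get an average over $d$ with the phase $e_p(\phi(h)x)$ that can be absorbed into the functions $f_i$. By linear independence of the $P_i$, the phase $\phi(h)P_k(d)$ cannot be cancelled by the other shifts. Another application of PET and Lemma~\ref{WBound} therefore forces $\phi$ to be additively structured in $h$, essentially a sum of lower-order terms. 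This structure means the phases can be removed at the cost of passing from $U^s$ to $U^{s-1}$.

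I expect this step to be the main obstacle. Extracting structure of $\phi$ with only polynomial losses, with no use of higher-order inverse theorems, is the delicate heart of Peluse's argument. I would first try to make it work for $s=2$ and then induct on $s$.

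\textbf{Step 3 (conclusion).} Iterate Step 2 down to $\|F\|_{U^1}=|\EE F|\ge\delta^{O_{\mathcal P}(1)}$. For $f_k$ with mean zero this is incompatible with a large $\Lambda$. More precisely, $\EE F=\Lambda_{\{P_i-P_k\}}$ evaluated on $f_0,\dots,f_{k-1}$, and $\Lambda_{\mathcal P}$ with a mean-zero $f_k$ is bounded by $\|F\|_2$ combined with the $U^1$ information. Since every loss is polynomial in $\delta$ plus $O(p^{-c})$, we get $\delta=O_{\mathcal P}(p^{-C})$. This closes the induction with a power saving.
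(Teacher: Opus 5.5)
The paper gives no proof of this statement. It is quoted from \cite{P19} and used as a black box. Your outline broadly matches the architecture of Peluse's argument: induction on $k$, PET to get $U^s$-control of the top function, then degree lowering for the dual function $F$ using only the $U^2$ inverse theorem, with polynomial losses giving a power saving. Your base case and the reduction to $\mathbb{E}f_k=0$ are fine.

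As a proof, however, it has a genuine gap. Step 2, the heart of \cite{P19}, is asserted rather than proved, and your sketch lacks the mechanism that makes it work. Concretely, $\Delta_hF$ is not itself a dual function. If you unfold one copy of $F$ you get $\Lambda_{\mathcal P}\bigl(f_0,\dots,f_{k-1},G_h\,e_p(-\phi(h)\,\cdot)\bigr)$. Here $G_h$ is the product of the remaining $2^{s-2}-1$ shifted copies of $F$ or $\overline F$, an $h$-dependent $1$-bounded function that may itself contain the phase $e_p(\phi(h)x)$. So ``linear independence prevents cancellation'' says nothing for an individual $h$. Your sentence that PET plus Lemma~\ref{WBound} forces $\phi$ to be a sum of lower-order terms is exactly the missing argument.

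Nor is $s=2$ a warm-up. Since $\|F\|_{U^2}^4\le|\mathbb{E}F|^4+\max_{\xi\neq0}|\widehat F(\xi)|^2$ and $\|F\|_{U^2}\ge\|\widehat F\|_{\ell^\infty}$, in the case you need (small $\mathbb{E}F$) this step amounts to showing $\widehat F(\xi)=\Lambda_{\mathcal P}(f_0,\dots,f_{k-1},e_p(-\xi\,\cdot))=O(p^{-c})$ for all $\xi\neq0$. That is the theorem itself with $f_k$ a nontrivial character. Step 1 cannot supply it, because $\|e_p(\xi\,\cdot)\|_{U^s}=1$ for $s\ge2$. This is precisely where linear independence must be used: for the dependent family $\{t,2t\}$ one computes $\widehat F(\xi)=\widehat{f_0}(-\xi)\widehat{f_1}(2\xi)$, which can have modulus $1$.

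Step 3 also does not close as written. A lower bound $|\mathbb{E}F|\ge\delta^{O(1)}$ is not in tension with $\mathbb{E}f_k=0$. On one side, $\mathbb{E}F=\Lambda_{\{P_1,\dots,P_{k-1}\}}(f_0,\dots,f_{k-1})\approx\prod_{i<k}\mathbb{E}f_i$ does not involve $f_k$ at all. On the other, $\Lambda_{\mathcal P}(f_0,\dots,f_k)=\mathbb{E}_x f_k(x)\bigl(F(x)-\mathbb{E}F\bigr)$ does not see the mean of $F$.

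You must first also split $f_j=\mathbb{E}f_j+(f_j-\mathbb{E}f_j)$ for $j<k$. The term with $f_0,\dots,f_{k-1}$ all constant vanishes because $\mathbb{E}f_k=0$. Every other term has some mean-zero $f_j$ with $j<k$, so $\mathbb{E}F=O(p^{-C})$ by the induction hypothesis. Only then does the chain
$|\Lambda|\ge\delta\Rightarrow\|F\|_2\gtrsim\delta\Rightarrow\|F\|_{U^s}\ge\delta^{O(1)}\Rightarrow|\mathbb{E}F|\ge\delta^{O(1)}$ (up to $O(p^{-c})$ losses) force $\delta\lesssim p^{-c}$.

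With this repair your reduction is sound, but only conditionally on the degree-lowering claim. That claim is the actual content of the theorem, and the paper does not attempt it, relying instead on \cite{P19}.
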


We next derive a consequence which will be used to control the pseudorandom part of the regularity decomposition.  The point is that if one of the functions has small Fourier coefficients, then the corresponding polynomial count is small for most values of the parameter \(d\).  This is the finite-field analogue of the usual generalized von Neumann principle, but here it follows directly from Theorem~\ref{thm_polynomial Szemeredi} after applying it to multiplicative derivatives.

\begin{lemma} \label{lma_U^2-control for poly}
    Let $\mathcal{P}= \{P_1, \dots, P_{k}\} \subset \ZZ[t]$ be a collection of linearly independent polynomials with zero constant terms. Then there exists $C=C(\mathcal{P})>0$ such that
    \[
    \EE_{d \in \FF_p} |I_{\cP}(f_0,\dots,f_k)(d)|^2 \leq
    \min_{i} \|\widehat{f_i}\|_{\ell^{\infty}(\FF_p)} + O_{\mathcal{P}}(p^{-C})
    \]
    for any 1-bounded functions $f_0, \dots, f_k $.
\end{lemma}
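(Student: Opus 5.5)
Expand the square and use the structure of the counting operator. Writing out $|I_{\cP}(f_0,\dots,f_k)(d)|^2$ as a product of the sum and its conjugate, with variables $x$ and $x+h$, and substituting $x\mapsto x$, $x' = x+h$, gives
\[
\EE_{d}|I_{\cP}(d)|^2=\EE_{h}\,\EE_{x,d}\,\triangle_h f_0(x)\prod_{i=1}^k \triangle_h f_i\bigl(x+P_i(d)\bigr)
=\EE_h \Lambda_{\cP}(\triangle_h f_0,\dots,\triangle_h f_k).
\]
Each $\triangle_h f_i$ is $1$-bounded, so Theorem~\ref{thm_polynomial Szemeredi} applies for every fixed $h$. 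The error is uniform in $h$, so averaging gives
\[
\EE_d|I_{\cP}(d)|^2=\EE_h\prod_{i=0}^k \EE_x \triangle_h f_i(x)+O_{\cP}(p^{-C}).
\]

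Next, fix the index $j$ attaining the minimum of $\|\widehat{f_j}\|_{\ell^\infty}$. For $i\neq j$, bound $|\EE_x\triangle_h f_i|\le 1$ trivially. This leaves
\[
\EE_h|\EE_x \triangle_h f_j(x)|=\EE_h |f_j * \tilde f_j(h)|,
\]
an autocorrelation. By Cauchy--Schwarz,
\[
\EE_h|\EE_x f_j(x)\overline{f_j}(x+h)|\le \Bigl(\EE_h|\EE_x f_j(x)\overline{f_j}(x+h)|^2\Bigr)^{1/2}.
\]
By Parseval, the right-hand side equals $\bigl(\sum_\xi|\widehat{f_j}(\xi)|^4\bigr)^{1/2}=\|f_j\|_{U^2}^2$. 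Then
\[
\sum_\xi|\widehat{f_j}(\xi)|^4\le \|\widehat{f_j}\|_\infty^2\sum_\xi|\widehat{f_j}(\xi)|^2\le \|\widehat{f_j}\|_\infty^2,
\]
using Plancherel and $1$-boundedness. Taking the square root gives $\|\widehat{f_j}\|_{\ell^\infty}$, which is the claimed bound.

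The one point needing care is the first step. One must check that the expansion really produces $\Lambda_{\cP}$ applied to derivatives with a common shift $h$ in every coordinate. This holds because the shifts $P_i(d)$ are the same in both copies, so the two copies differ only by the translation $h$ of $x$. One must also check that the error term of Theorem~\ref{thm_polynomial Szemeredi} is uniform over all $1$-bounded inputs, which is exactly how it is stated. Everything else is routine Fourier analysis.
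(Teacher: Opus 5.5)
Your proposal is correct and follows essentially the same route as the paper: you expand the square with a common shift $h$, apply Theorem~\ref{thm_polynomial Szemeredi} to the derivatives $\triangle_h f_i$, and bound all but one factor trivially. You then finish with Cauchy--Schwarz and $\|\widehat{f_j}\|_{\ell^4}^4\le\|\widehat{f_j}\|_{\ell^\infty}^2\|\widehat{f_j}\|_{\ell^2}^2\le\|\widehat{f_j}\|_{\ell^\infty}^2$.
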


\begin{proof}
Expanding the square and introducing the new variable \(h=x'-x\), we get that
\begin{align*}
    \EE_{d \in \FF_p} |I_{\cP}(f_0,\dots,f_k)(d)|^2
    &=
    \EE_{d \in \FF_p}
    \left( \EE_{x \in \FF_p} f_0(x) \prod_{i=1}^k f_i( x+ P_i(d))\right)
    \left( \EE_{x' \in \FF_p} \bar{f_0}(x') \prod_{i=1}^k \bar{f_i}( x'+ P_i(d))\right)\\
    &= \EE_{h \in \FF_p} 
    \left(
    \EE_{x,d \in \FF_p} \triangle_h f_0(x)
    \prod_{i=1}^k \triangle_hf_i( x+ P_i(d))
    \right).
\end{align*}
For each fixed \(h\), the functions \(\triangle_h f_i\) are still \(1\)-bounded.  Applying Theorem~\ref{thm_polynomial Szemeredi} to these functions gives
\begin{align*}
    \EE_{d \in \FF_p} |I_{\cP}(f_0,\dots,f_k)(d)|^2
    &= \EE_{h \in \FF_p}
    \left(
        \prod_{i=0}^k
        \EE_{x \in \FF_p} \triangle_h f_i(x)
    \right)
    +O_{\mathcal{P}}(p^{-C}) \\
    &\leq \min_{i}
    \EE_{h \in \FF_p}
    \left|
        \EE_{x \in \FF_p} \triangle_h f_i(x)
    \right|
    +O_{\mathcal{P}}(p^{-C}),
\end{align*}
where we used the \(1\)-boundedness of the functions.  By Cauchy--Schwarz,
\begin{align*}
    \EE_{h \in \FF_p}
    \left|
        \EE_{x \in \FF_p} \triangle_h f_i(x)
    \right|
    \leq
    \left(
        \EE_{h \in \FF_p}
        \left|
            \EE_{x \in \FF_p} \triangle_h f_i(x)
        \right|^2
    \right)^{1/2}.
\end{align*}
Finally,
\[
\EE_{h \in \FF_p}
\left|
    \EE_{x \in \FF_p} \triangle_h f_i(x)
\right|^2
=
\|\widehat{f_i}\|_{\ell^4(\FF_p)}^4.
\]
Since \(f_i\) is \(1\)-bounded, \(\|\widehat{f_i}\|_{\ell^2}\leq 1\), and so
\[
    \|\widehat{f_i}\|_{\ell^4}^4
    \leq
    \|\widehat{f_i}\|_{\ell^\infty}^2
    \|\widehat{f_i}\|_{\ell^2}^2
    \leq
    \|\widehat{f_i}\|_{\ell^\infty}^2.
\]
Therefore,
\[
    \left(
        \EE_{h \in \FF_p}
        \left|
            \EE_{x \in \FF_p} \triangle_h f_i(x)
        \right|^2
    \right)^{1/2}
    \leq
    \|\widehat{f_i}\|_{\ell^\infty(\FF_p)}.
\]
Taking the minimum over \(i\) completes the proof.
\end{proof}

We now record the arithmetic regularity decomposition that will be used to split \(1_A\).  The structured component is approximately invariant under translations from a low-rank Bohr set, the pseudorandom component has small Fourier coefficients, and the small component is negligible in \(L^2\).

\begin{proposition}[Arithmetic regularity decomposition for $U^2$-norm, \cite{BSST22} and \cite{T14}] 
\label{prop_arithmetic regularity decomposition}
    Let $G$ be a compact abelian group with Haar probability measure $\mu$. Fix parameters $\delta, \epsilon>0$, growth functions $\eta_1, \eta_2: \RR_+ \rightarrow \RR_+$, and a finite set $\Gamma_0 \subset \widehat{G}$. Given any function $f : G \rightarrow[0,1]$, there exist $\rho_1, \rho_2>0$ and a finite set $\Gamma$ such that $\Gamma_0 \subseteq \Gamma \subseteq \widehat{G}$ with the following properties: 
    \begin{enumerate}
        \item $|\Gamma|=O_{\delta, \epsilon, |\Gamma_0|, \eta_1, \eta_2}(1)$.

        \item $\rho_1 \leq 1/ \eta_1(|\Gamma|+\delta^{-1}+\epsilon^{-1})$ and $\rho_2 \leq 1/ \eta_2(\rho_1^{-1})$, where $\rho_2$ is bounded away from zero independently of $f$.

        \item There is a decomposition $f=f_{\text{str}}+f_{\text{psr}}+f_{\text{sml}}$, where
        \begin{enumerate}
            \item $f_{\text{str}},f_{\text{psr}},f_{\text{sml}}$ are all 1-bounded,

            \item $f_{\text{str}}$ is nonnegative, has mean $\int f_{\text{str}} \, d \mu= \int f \, d \mu$, and obeys
            \[
            f_{\text{str}}(x+d)=f_{\text{str}}(x)+O(\epsilon)
            \]
            whenever $x \in G$ and $d \in B(\Gamma, \rho_1)$,

            \item $\|\widehat{f_{\text{psr}}} \|_{\ell^{\infty}(\widehat{G})} \leq \rho_2$,

            \item $\|f_{\text{sml}}\|_{L^2(G)} \leq \epsilon$.
        \end{enumerate}
    \end{enumerate}
\end{proposition}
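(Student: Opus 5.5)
The plan is the standard energy-increment argument, with Bohr-set averages as the structured projections. By Parseval, for any threshold $\tau>0$ the large spectrum $\mathrm{Spec}_\tau(f)=\{\xi\in\widehat G:|\widehat f(\xi)|\geq\tau\}$ has size at most $\tau^{-2}$, since $\sum_\xi|\widehat f(\xi)|^2=\|f\|_{L^2}^2\leq 1$.

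\textbf{Step 1: the sequence of scales.} I would build a decreasing sequence of thresholds $\tau_0>\tau_1>\cdots$ and radii $r_0>r_1>\cdots$, defined recursively from $\eta_1,\eta_2,\delta,\epsilon$. At stage $j$, set $\Gamma_j=\Gamma_0\cup\mathrm{Spec}_{\tau_j}(f)$, so that $|\Gamma_j|\leq|\Gamma_0|+\tau_j^{-2}$. Let $B_j=B(\Gamma_j,r_j)$, with radius $r_j\leq 1/\eta_1(|\Gamma_j|+\delta^{-1}+\epsilon^{-1})$, and take $\tau_{j+1}\leq 1/\eta_2(r_j^{-1})$, smaller still if needed. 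I would use the nonnegative kernel $K_j=\mu_{B_j}*\mu_{B_j}$, where $\mu_B=1_B/\mu(B)$. Its Fourier transform is $|\widehat{\mu_{B_j}}|^2\in[0,1]$, and it equals $1-O(r_j)$ on $\Gamma_j$, since $|e(\xi x)-1|\lesssim r_j$ for $x\in B_j$ and $\xi\in\Gamma_j$. Each $f*K_j$ takes values in $[0,1]$ and has the same mean as $f$.

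\textbf{Step 2: pigeonholing the energy.} The energies $E_j=\sum_\xi|\widehat f(\xi)|^2\widehat{K_j}(\xi)^2$ lie in $[0,1]$. I want an index $j\leq O(\epsilon^{-2})$ with $\|f*K_j-f*K_{j+1}\|_{L^2}\leq\epsilon$. The natural route is to show that the energies are approximately monotone and that
\[
\|f*K_j-f*K_{j+1}\|_{L^2}^2\approx E_{j+1}-E_j .
\]
This would follow from the nesting $B_{j+1}\subseteq B_j$, which gives $\widehat{K_{j+1}}\approx 1$ wherever $\widehat{K_j}$ is non-negligible up to the errors $O(r_j)$. This is the step I expect to be the main obstacle. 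Bohr kernels are not genuine projections, so monotonicity fails exactly. The errors must be controlled by choosing $r_{j+1}$ much smaller than $r_j$, and by using regular Bohr sets in Bourgain's sense, obtained by dilating the radius by a factor in $[1,2]$. For regular Bohr sets, $\mu_{B}*\mu_{B'}\approx\mu_B$ when $B'$ has a much smaller radius. If this proves too delicate, I would fall back on the Hilbert-space version: use genuine orthogonal projections onto spans of characters, then smooth them with the Bohr kernel at the end.

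\textbf{Step 3: defining the pieces and checking properties.} With that $j$ fixed, set
\[
f_{\mathrm{str}}=f*K_j,\qquad f_{\mathrm{sml}}=f*K_{j+1}-f*K_j,\qquad f_{\mathrm{psr}}=f-f*K_{j+1},
\]
and take $\Gamma=\Gamma_j$, $\rho_1=r_j$, $\rho_2=\tau_{j+1}$, adjusted by absolute constants. The three pieces are $1$-bounded, since each is a difference of $[0,1]$-valued functions. $f_{\mathrm{str}}$ is nonnegative with the correct mean, and Step 2 gives $\|f_{\mathrm{sml}}\|_{L^2}\leq\epsilon$. For pseudorandomness, $\widehat{f_{\mathrm{psr}}}(\xi)=\widehat f(\xi)(1-\widehat{K_{j+1}}(\xi))$. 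If $|\widehat f(\xi)|\geq\tau_{j+1}$ then $\xi\in\Gamma_{j+1}$, so the second factor is $O(r_{j+1})$; otherwise the first factor is below $\tau_{j+1}$. Hence $\|\widehat{f_{\mathrm{psr}}}\|_{\ell^\infty}\lesssim\tau_{j+1}$.

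For the approximate invariance, take $d\in B(\Gamma_j,\rho_1)$ with $\rho_1$ a small multiple of $\epsilon r_j$. Then
\[
|f_{\mathrm{str}}(x+d)-f_{\mathrm{str}}(x)|\leq\|K_j(\cdot+d)-K_j\|_{L^1}\leq\|\mu_{B_j}(\cdot+d)-\mu_{B_j}\|_{L^1},
\]
and regularity of $B_j$ bounds the last quantity by $O(\epsilon)$. This re-indexing of radii must be built into the recursion from Step 1.

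\textbf{Step 4: uniformity of the bounds.} Since $j\leq O(\epsilon^{-2})$ and every parameter in the recursion depends only on $\delta,\epsilon,|\Gamma_0|,\eta_1,\eta_2$, the bounds $|\Gamma|=O(1)$ and the lower bound on $\rho_2$ are uniform in $f$.
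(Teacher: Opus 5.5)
The paper does not prove this proposition; it is quoted from \cite{BSST22} and \cite{T14}, so there is no internal argument to match. Your architecture is a standard one and can deliver every listed property: thresholds $\tau_j$, large spectra $\Gamma_j$, regular Bohr sets $B_j$, kernels $K_j=\mu_{B_j}*\mu_{B_j}$, and the split $f*K_j$, $f*K_{j+1}-f*K_j$, $f-f*K_{j+1}$. The gap is Step 2, which carries essentially all of the content and which you leave open. Moreover, the relation you aim for there is false as stated. With $a_j:=\widehat{K_j}\in[0,1]$ one has $\|f*K_j-f*K_{j+1}\|_2^2=\sum_\xi|\widehat f(\xi)|^2(a_j-a_{j+1})^2$ and $E_{j+1}-E_j=\sum_\xi|\widehat f(\xi)|^2(a_{j+1}^2-a_j^2)$. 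Bohr kernels take intermediate values on many frequencies, so these are not comparable in both directions: for $a_j=\tfrac12$, $a_{j+1}=1$ the integrands are $\tfrac14$ and $\tfrac34$. Only the one-sided bound is needed, via $(b-a)^2\le b^2-a^2$ for $0\le a\le b$. That in turn requires pointwise approximate monotonicity, $a_{j+1}\ge a_j$ up to a summable error. Nesting $B_{j+1}\subseteq B_j$ alone does not give this, and the relevant error is not $O(r_j)$; it is governed by $r_{j+1}/r_j$ and the rank, through regularity of $B_j$.

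\textbf{Closing Step 2.} Suppose $B_j$ is regular of rank $d_j=|\Gamma_j|$ and $r_{j+1}\le\kappa_j r_j$. Then $B_{j+1}\subseteq B(\Gamma_j,\kappa_jr_j)$, so $\|\mu_{B_j}*\mu_{B_{j+1}}-\mu_{B_j}\|_{L^1}\lesssim\kappa_jd_j=:\eta_j$. Taking Fourier transforms gives $|\widehat{\mu_{B_j}}(\xi)|\,|1-\widehat{\mu_{B_{j+1}}}(\xi)|\le\eta_j$ for every $\xi$. Splitting according to whether $|\widehat{\mu_{B_j}}(\xi)|\ge\sqrt{\eta_j}$ yields $a_{j+1}\ge a_j-2\sqrt{\eta_j}$. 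Hence $(a_j-a_{j+1})^2\le a_{j+1}^2-a_j^2+8\sqrt{\eta_j}$, and so $\|f*K_j-f*K_{j+1}\|_2^2\le E_{j+1}-E_j+8\sqrt{\eta_j}$. Choose $\kappa_j$ from the $f$-independent bound $d_j\le|\Gamma_0|+\tau_j^{-2}$ so that $\sum_j\sqrt{\eta_j}\le\tfrac18$. Telescoping then gives some $j<\lceil 2\epsilon^{-2}\rceil$ with $\|f*K_j-f*K_{j+1}\|_2\le\epsilon$.

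\textbf{Two smaller fixes.} First, regularity only gives $\|\mu_{B_j}(\cdot+y)-\mu_{B_j}\|_1\lesssim d_j\rho_1/r_j$ for $y\in B(\Gamma_j,\rho_1)$, so take $\rho_1\asymp\epsilon r_j/|\Gamma_j|$ rather than $\epsilon r_j$. Second, your pseudorandom bound is really $\lesssim\max(\tau_{j+1},r_{j+1})$, so impose $r_{j+1}\le\tau_{j+1}$ and choose $\tau_{j+1}\le c/\eta_2(\rho_1^{-1})$ for that $\rho_1$.

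\textbf{The fallback does not work as a substitute.} Orthogonal projections onto spans of characters make the energy exactly monotone, but they do not preserve $L^\infty$ bounds. So $f_{\mathrm{psr}}$ and $f_{\mathrm{sml}}$ would not be $1$-bounded, which the paper needs in order to feed them into Peluse's theorem. If you instead re-split $f-f*K$ with a second Bohr kernel, you bring back the $L^2$ mass of $f$ on small-coefficient frequencies where the two kernels differ. The projection energies never see that mass, and it is exactly what the kernel-based increment in Step 2 controls.
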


The next lemma guarantees that the polynomial images of a random parameter \(d\) are equidistributed with respect to a fixed low-rank Bohr set.  This is another place where the linear independence of the polynomials is needed.

\begin{lemma} \label{lma_equdistribution for poly. sequence}
Let $\mathcal{P}= \{P_1, \dots, P_{k}\} \subset \ZZ[t]$ be a collection of linearly independent polynomials with zero constant terms and let $B=B(\Gamma,\rho)$ be a Bohr set with rank $r>0$. Then
\[
    \EE_{d\in \FF_p}  \prod_{i=1}^k \mathbf 1_B(P_i(d))=
    \left( \EE \mathbf 1_B \right)^{k}+ O_{\cP,r}\left(p^{-1/2} ( \log p)^{kr} \right).
\]

\end{lemma}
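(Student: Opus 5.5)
We expand each indicator $1_B$ by Fourier inversion and then bound the resulting polynomial exponential sums with the Weil bound (Lemma~\ref{WBound}). Concretely, writing $1_B(y)=\sum_{\xi\in\FF_p}\widehat{1_B}(\xi)e_p(\xi y)$, we get
\[
    \EE_{d\in\FF_p}\prod_{i=1}^k 1_B(P_i(d))
    =
    \sum_{\xi_1,\dots,\xi_k\in\FF_p}
    \Bigl(\prod_{i=1}^k\widehat{1_B}(\xi_i)\Bigr)
    \EE_{d\in\FF_p} e_p\Bigl(\textstyle\sum_{i=1}^k \xi_i P_i(d)\Bigr).
\]
The term $\xi_1=\dots=\xi_k=0$ contributes $\prod_i \widehat{1_B}(0)=(\EE 1_B)^k$, which is the main term. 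It remains to show that all other terms together contribute $O_{\cP,r}(p^{-1/2}(\log p)^{kr})$.

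For $(\xi_1,\dots,\xi_k)\neq 0$, consider the polynomial $Q_\xi(t)=\sum_i \xi_i P_i(t)\in\FF_p[t]$. Its degree is at most $D:=\max_i\deg P_i$, which is fixed. Since $p\geq p_0(\cP)$ exceeds $D$, Lemma~\ref{WBound} gives $|\EE_d e_p(Q_\xi(d))|\lesssim_{\cP} p^{-1/2}$, provided $Q_\xi$ is nonconstant. Because the $P_i$ have zero constant term, nonconstant is the same as nonzero.

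The main obstacle is that linear independence over $\mathbb Z$ (equivalently over $\mathbb Q$) does not automatically survive reduction mod $p$. To handle this, write the coefficient matrix $M$ of $P_1,\dots,P_k$ with respect to the monomials $t,\dots,t^D$. By linear independence, $M$ has rank $k$ over $\mathbb Q$, so some $k\times k$ minor is a nonzero integer. For all primes $p$ not dividing that minor, which holds for all $p$ larger than a constant depending only on $\cP$, $M$ still has rank $k$ over $\FF_p$. Hence $Q_\xi\neq 0$ whenever $\xi\neq 0$. For the finitely many small primes the claimed bound is trivial after enlarging the implicit constant, since the left side and $(\EE 1_B)^k$ both lie in $[0,1]$ and $p^{-1/2}(\log p)^{kr}$ is bounded below for $p$ in a finite range.

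Finally, we sum the error terms using Lemma~\ref{lma:l^1 for Bohr}:
\[
    \sum_{\xi\neq 0}\prod_{i=1}^k|\widehat{1_B}(\xi_i)|\cdot O_{\cP}(p^{-1/2})
    \leq
    O_{\cP}(p^{-1/2})\,\|\widehat{1_B}\|_{\ell^1}^k
    \lesssim_{\cP,r} p^{-1/2}(\log p)^{kr}.
\]
This gives the stated estimate.
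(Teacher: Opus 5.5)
Your proposal is correct and follows the paper's proof essentially verbatim: Fourier-expand each $1_B$, take the zero tuple as the main term $(\EE 1_B)^k$, apply the Weil bound to each nonzero tuple, and sum the error using the $\ell^1$ Fourier bound for Bohr sets. Your additional steps make explicit what the paper compresses into ``for all sufficiently large $p$'': linear independence survives reduction mod $p$ because a $k\times k$ minor is a nonzero integer, and the finitely many small primes are absorbed into the implicit constant.
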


\begin{proof}
By Fourier inversion,
\begin{align*}
    \EE_{d\in \FF_p}  \prod_{i=1}^k \mathbf 1_B(P_i(d))
    &=
    \EE_{d\in \FF_p}
    \sum_{\xi_1, \dots, \xi_k \in \FF_p}
    \left(
        \prod_{i=1}^k \widehat{\mathbf 1_B}(\xi_i)
    \right)
    e_p \left( \sum_{i=1}^k \xi_i P_i(d) \right)\\
    &=
    \sum_{\xi_1, \dots, \xi_k \in \FF_p}
    \left(
        \prod_{i=1}^k \widehat{\mathbf 1_B}(\xi_i)
    \right) 
    \EE_{d\in \FF_p}
    e_p \left(  \sum_{i=1}^k \xi_i P_i(d) \right).
\end{align*}
The contribution of \((\xi_1,\dots,\xi_k)=(0,\dots,0)\) is
\[
    \left(\mathbb E 1_B\right)^k.
\]
For every nonzero tuple \((\xi_1,\dots,\xi_k)\), the polynomial
\[
    \sum_{i=1}^k \xi_i P_i(d)
\]
is nonconstant, since the polynomials \(P_1,\dots,P_k\) are linearly independent.  Hence, for all sufficiently large \(p\), Lemma~\ref{WBound} gives
\[
    \left|
    \EE_{d\in \FF_p}
    e_p\left(\sum_{i=1}^k \xi_i P_i(d)\right)
    \right|
    \lesssim_{\mathcal P}
    p^{-1/2}.
\]
Therefore, by Lemma~\ref{lma:l^1 for Bohr},
\begin{align*}
    &\left|
    \sum_{\substack{\xi_1,\dots,\xi_k\in\mathbb F_p\\
    (\xi_1,\dots,\xi_k)\neq (0,\dots,0)}}
    \left(
        \prod_{i=1}^k \widehat{\mathbf 1_B}(\xi_i)
    \right)
    \EE_{d\in \FF_p}
    e_p\left(\sum_{i=1}^k \xi_i P_i(d)\right)
    \right|\\
    &\qquad\lesssim_{\mathcal P}
    p^{-1/2}
    \prod_{i=1}^k
    \|\widehat{1_B}\|_{\ell^1(\FF_p)}
    \lesssim_{\mathcal P,r}
    p^{-1/2}(\log p)^{kr}.
\end{align*}
This proves the lemma.
\end{proof}

We are now ready to prove our main positive result, that the popular common difference phenomenon can be made to hold simultaneously for all subsets of a finite set of linearly independent polynomials $\mathcal{P}$.

\begin{proof}[Proof of Theorem \ref{thm_simultaneous polynomial common difference}]
Suppose $\mathcal{P}= \{P_1, \dots, P_{k}\} \subset \ZZ[t]$ is a collection of linearly independent polynomials with zero constant terms.  Fix \(\epsilon>0\) and let \(A \subseteq \FF_p\).  We apply Proposition~\ref{prop_arithmetic regularity decomposition} to $f=1_A$ with \(G=\mathbb F_p\).  This gives a decomposition
\[
    f=f_{\text{str}}+f_{\text{psr}}+f_{\text{sml}}
\]
with the properties described in Proposition~\ref{prop_arithmetic regularity decomposition}.  In particular,
\[
    \mathbb E f_{\mathrm{str}}
    =
    \mathbb E f
    =
    \alpha,
\]
the function \(f_{\mathrm{str}}\) is approximately invariant under translations by elements of \(B(\Gamma,\rho_1)\), the function \(f_{\mathrm{psr}}\) has Fourier coefficients bounded by \(\rho_2\), and \(f_{\mathrm{sml}}\) has \(L^2\)-norm at most \(\epsilon\).

Let $B=B(\Gamma,\rho_1)$. We define a set $B^*$ of Bohr-good parameters by
\[
    B^*:=\{d \in \mathbb F_p:\ P_i(d)\in B \text{ for all } i \in \{1, \dots, k\} \}.
\]
By Lemma~\ref{lma_equdistribution for poly. sequence},
\[
    \frac{1}{p} |B^*|
    =
    \left( \EE 1_B \right)^{k}
    +
    O_{\cP,|\Gamma|}\left(p^{-1/2} ( \log p)^{k|\Gamma|} \right).
\]
In particular, for \(p\) sufficiently large in terms of \(\mathcal P\), \(\Gamma\), and \(\rho_1\), Lemma~\ref{lma_B_size} implies that
\[
    \frac{1}{p}|B^*|
    \gtrsim
    \left(\mathbb E 1_B\right)^k
    \geq
    \rho_1^{k|\Gamma|}.
\]

We now examine the contributions of each of the three components $f_{\mathrm{str}}, f_{\mathrm{psr}}, f_{\mathrm{sml}}$ to \(I_{\mathcal P_\omega}\) for $\omega\in\{0,1\}^k$. 

\begin{enumerate}
    \item[1.] (\emph{Contribution of the structural part.})
Suppose \(d\in B^*\).  Then \(P_i(d)\in B\) for every \(i\).  Hence, whenever \(\omega_i=1\), the approximate invariance of \(f_{\mathrm{str}}\) gives that 
    \[
        f_{\mathrm{str}}(x+P_i(d))
        =
        f_{\mathrm{str}}(x)+O(\epsilon)
    \]
    uniformly in \(x\).  Therefore,
    \begin{align*}
        I_{\cP_{\omega}}(f_{\text{str}},\dots,f_{\text{str}})(d)
        &=
         \EE_{x \in \FF_p}
         f_{\text{str}}(x)
         \prod_{\substack{i \in \{1,\dots,k\}, \\ \omega_i=1}}
         f_{\text{str}}( x+ P_i(d))\\
        &=
         \EE_{x \in \FF_p}
         f_{\text{str}}(x)^{1+\sum_i\omega_i}
         +O_k(\epsilon).
    \end{align*}
    Since \(f_{\mathrm{str}}\) is nonnegative, Jensen's inequality gives
    \[
        \EE_{x \in \FF_p}
        f_{\text{str}}(x)^{1+\sum_i\omega_i}
        \geq
        \left(\EE_{x \in \FF_p} f_{\text{str}}(x)\right)^{1+\sum_i\omega_i}.
    \]
    Thus, for every \(d\in B^*\),
    \[
        I_{\cP_{\omega}}(f_{\text{str}},\dots,f_{\text{str}})(d)
        \geq
        \left(\EE_{x \in \FF_p} f_{\text{str}}(x)\right)^{1+\sum_i\omega_i}
        +O_k(\epsilon).
    \]

    \item[2.] (\emph{Control of the pseudorandom part.})
Consider any term in the expansion of
    \[
        I_{\mathcal P_\omega}(f,\dots,f)(d)
    \]
    in which at least one active slot is occupied by \(f_{\mathrm{psr}}\).  Here the active slots are the base slot \(0\) and the slots \(i\) for which \(\omega_i=1\).  Applying Lemma~\ref{lma_U^2-control for poly}, with the inactive slots filled by the constant function \(1\), gives a constant \(C=C(\mathcal P)>0\) such that
    \[
    \EE_{d \in \FF_p} |I_{\cP_{\omega}}(f_0,\dots,f_k)(d)|^2 \leq
     \|\widehat{f_{\text{psr}}}\|_{\ell^{\infty}(\FF_p)}
     +O_{\mathcal{P}}(p^{-C}).
    \]
    By Markov's inequality, 
    \begin{align*}
        \frac{|\{d: |I_{\cP_{\omega}}(f_0,\dots,f_k)(d)|> \epsilon\}|}{|\FF_p|}
        &= \frac{|\{d: |I_{\cP_{\omega}}(f_0,\dots,f_k)(d)|^2> \epsilon^2\}|}{|\FF_p|}\\
        &\leq
        \epsilon^{-2}
        \left(
            \|\widehat{f_{\text{psr}}}\|_{\ell^{\infty}(\FF_p)}
            +O_{\mathcal{P}}(p^{-C})
        \right)\\
        &\leq
        \epsilon^{-2}
        \left(
            \rho_2+O_{\mathcal{P}}(p^{-C})
        \right).
    \end{align*}
    Since there are only \(O_k(1)\) choices of \(\omega\) and only \(O_k(1)\) terms in each expansion, the union of all exceptional sets arising from pseudorandom terms has density at most
    \[
        O_k\!\left(
        \epsilon^{-2}
        \left(
            \rho_2+O_{\mathcal P}(p^{-C})
        \right)
        \right).
    \]

    \item[3.] (\emph{Control of the small part.})
Consider any term in the expansion of
    \[
        I_{\mathcal P_\omega}(f,\dots,f)(d)
    \]
    in which at least one active slot is occupied by \(f_{\mathrm{sml}}\).  By Cauchy--Schwarz and the \(1\)-boundedness of the remaining factors, we have that, for every \(d\in\mathbb F_p\), 
    \[
        |I_{\cP_{\omega}}(f_0,\dots,f_k)(d)|
        \leq
        \|f_{\text{sml}}\|_{L^2(\mathbb F_p)}
        \leq
        \epsilon.
    \]
\end{enumerate}

Combining these estimates, we see that we can choose \(d\in B^*\) outside all of the pseudorandom exceptional sets provided that
\begin{equation} \label{eq:condition for parameters}
    \frac{1}{p}|B^*|
    >
    O_k\!\left(
        \epsilon^{-2}
        \left(
            \rho_2+O_{\mathcal P}(p^{-C})
        \right)
    \right).
\end{equation}
For such a \(d\), all pseudorandom terms are \(O_k(\epsilon)\), all small terms are \(O_k(\epsilon)\), and so
\begin{align*}
    I_{\cP_{\omega}}(f,\dots,f)(d)
    &=
    I_{\cP_{\omega}}(f_{\text{str}},\dots,f_{\text{str}})(d)
    +O_k(\epsilon)\\
    &\geq
    \left(
        \EE_{x \in \FF_p} f_{\text{str}}(x)
    \right)^{1+\sum_i\omega_i}
    +O_k(\epsilon).
\end{align*}
Since \(\mathbb E f_{\mathrm{str}}=\mathbb E f=\alpha\), this becomes
\[
    I_{\cP_{\omega}}(f,\dots,f)(d)
    \geq
    \alpha^{1+\sum_i\omega_i}
    +O_k(\epsilon)
\]
for every nonzero \(\omega\in\{0,1\}^k\).

To ensure that \eqref{eq:condition for parameters} holds, recall that 
\[
    \frac{1}{p}|B^*|
    \gtrsim
    \rho_1^{k|\Gamma|}.
\]
Thus, \eqref{eq:condition for parameters} holds as long as the growth functions \(\eta_1,\eta_2\) are chosen so that \(\rho_2\) is sufficiently small compared with \(\epsilon^2\rho_1^{k|\Gamma|}\) and \(p\) is sufficiently large that the error \(O_{\mathcal P}(p^{-C})\) is negligible.  With this choice of parameters, we obtain an appropriate \(d\in B^*\). Moreover, since our lower bound for \(|B^*|\) gives \(|B^*|>1\) for \(p\) sufficiently large, we may take $d$ to be nonzero. 

Finally, the case \(\omega=(0,\dots,0)\) is immediate, since
\[
    I_{\mathcal P_\omega}(1_A,\dots,1_A)(d)
    =
    \mathbb E_{x\in\mathbb F_p}1_A(x)
    =
    \alpha
    =
    \alpha^{1+\sum_i\omega_i}.
\]
After replacing \(\epsilon\) by a sufficiently small constant multiple of the \(\epsilon\) appearing in the statement of the theorem, this proves Theorem~\ref{thm_simultaneous polynomial common difference}.
\end{proof}

\section{Proof of Theorem \ref{thm:k>2}} \label{sec:negative-ap}

In this section we prove Theorem~\ref{thm:k>2}.  The argument has two parts.  First, we construct a bounded function
\[
    f:\mathbb F_p^n\to[0,1]
\]
of density \(1/2+o_n(1)\) for which the simultaneous popular difference conclusion fails.  Second, we pass from this weighted construction to an actual set \(A\subseteq\mathbb F_p^n\) via a standard random sampling argument.

The weighted statement is the following proposition.

\begin{proposition} \label{prop:k>2}
For all sufficiently large primes $p$, there exists a constant $c > 0$ such that, for all sufficiently large $n$, there is a function $f:\FF_p^n\to[0,1]$ with $\EE f=1/2+o_n(1)$ and
        \[
        \max_{d \neq 0} \min \{ I_3(f)(d), I_3(f)(2d) \} \leq \frac{1}{8}-c.
        \]
\end{proposition}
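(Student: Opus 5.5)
The plan is to build $f$ as a function of a single nondegenerate quadratic form. Let $q(x)=x_1^2+\dots+x_n^2$ on $\FF_p^n$, with associated bilinear form $b(x,y)=2\langle x,y\rangle$. Set
\[
f(x)=\tfrac12+G(q(x)),
\]
where $G:\FF_p\to[-\tfrac12,\tfrac12]$ has mean zero and is to be chosen. Then $\EE f=\tfrac12+\EE_x G(q(x))=\tfrac12+o_n(1)$, because $q(x)$ is equidistributed in $\FF_p$ up to $O(p^{-n/2})$ by Gauss sums. For $d\neq 0$ we have
\[
q(x+jd)=q(x)+j\,b(x,d)+j^2q(d).
\]
So the triple $(q(x),q(x+d),q(x+2d))$ equals $(u,\,u+v+s,\,u+2v+4s)$ with $u=q(x)$, $v=b(x,d)$, $s=q(d)$. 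Since $b(\cdot,d)$ is a nonzero linear form, a Gauss-sum computation will show that $(q(x),b(x,d))$ is equidistributed on $\FF_p^2$ with error $O(p^{-(n-1)/2})$, uniformly in $d\neq0$. After reparametrising $v$, this gives
\[
I_3(f)(d)=\Psi(q(d))+o_n(1),\qquad \Psi(s):=\EE_{u,v}F(u)F(u+v)F(u+2v+2s),\quad F=\tfrac12+G.
\]
Since $q(2d)=4q(d)$, we also get $I_3(f)(2d)=\Psi(4q(d))+o_n(1)$. The problem therefore reduces to a one-variable question on $\FF_p$: find a mean-zero $G$ with
\[
\min\{\Psi(s),\Psi(4s)\}\le \tfrac18-2c \quad\text{for every } s\in\FF_p,
\]
including $s=0$, which covers isotropic $d$.

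Next I expand $\Psi$ with $F=\tfrac12+G$. Every term that is linear or quadratic in $G$ vanishes: in each such term at least one of $u$, $u+v$, $u+2v+2s$ is uniform independently of the others (this uses $p$ odd), and $\EE G=0$. Hence
\[
\Psi(s)=\tfrac18+T(s),\qquad T(s)=\sum_{\xi}\widehat G(\xi)^2\,\widehat G(-2\xi)\,e_p(\pm2\xi s),
\]
where the sign depends only on convention. I will take $\widehat G$ real and symmetric and supported on a few pairs $\{\pm m_j\}\cup\{\pm 2m_j\}$, with the $m_j$ chosen so that no spurious coincidences $\xi,-2\xi$ occur among the support. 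Then
\[
T(s)=\sum_j a_j\cos\bigl(4\pi m_j s/p\bigr),
\]
where $a_j=2\widehat G(m_j)^2\widehat G(2m_j)$ has the sign of $\widehat G(2m_j)$. Since $s/p$ is $1/p$-dense in $\TT$ and $p$ is large, it suffices to find a trigonometric polynomial
\[
h(t)=\sum_j a_j\cos(4\pi m_j t)
\]
such that $\min\{h(t),h(4t)\}\le -c_0<0$ for every $t\in\TT$. To keep $F\in[0,1]$, all coefficients are then scaled down by a small fixed factor, which yields some $c=c(h)>0$ independent of $n$.

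The main obstacle is exhibiting such an $h$. A single frequency fails. For example, $h(t)=-\cos(4\pi t)$ and $h(4t)$ are both nonnegative at $t=3/16$. Note also that $h(0)=\sum_j a_j$ must be negative, so the construction must in particular give $G$ fewer three-term progressions in $\FF_p$ than random, which is a Ruzsa-type phenomenon.

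My first attempt would take $m_j=4^j$ for $j=0,\dots,J$ and negative coefficients $a_j$ with rapidly decaying magnitudes. The idea is that $h(4t)$ is essentially $h(t)$ shifted one scale down, so the positive part of $h$ near a point $t$ is forced to be cancelled at $4t$ by the leading negative term. If this lacunary choice does not work, I would fall back on a compactness/numerical search for a low-degree $h$ that satisfies the condition on all of $\TT$. That condition is open, so a finite certificate suffices, and the choice is made once and for all, independently of $p$ and $n$. Finally, I need to check that the chosen frequencies are nonzero and distinct modulo $p$ for $p$ large; this is where "sufficiently large $p$" enters.
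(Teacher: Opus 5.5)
Your reduction is correct and is the paper's: take $f=\tfrac12+G(q(x))$, use equidistribution of $(q(x),b(x,d))$, get $I_3(f)(2d)\approx\Psi(4q(d))$, and write $\Psi(s)-\tfrac18$ as the cubic Fourier sum $\sum_\xi\widehat G(\xi)^2\widehat G(-2\xi)e_p(2\xi s)$.

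The gap is the step you yourself call the main obstacle. You never exhibit a cosine polynomial $h$ with $\min\{h(t),h(4t)\}\le -c_0$ on all of $\mathbb{T}$, and that is the real content of the proposition.

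Your first candidate fails for every choice of coefficients. The points $t=0$ and $t=1/3$ are fixed by $t\mapsto 4t$, so you need both $h(0)<0$ and $h(1/3)<0$. But every frequency $2\cdot 4^j$ is $\equiv 2 \pmod 3$, so $h(1/3)=-\tfrac12\sum_j a_j=-\tfrac12 h(0)$, and the two values have opposite signs.

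Incidentally, $m_j=4^j$ also violates your own no-coincidence condition. Since $-2\cdot(2\cdot 4^j)=-m_{j+1}$ lies in the support, your formula for $T$ would be wrong for that choice anyway.

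A compactness or numerical search is not an existence argument. Moreover, your restricted realization only allows supports with no two $m_j$ in ratio $2$ or $4$, which further shrinks the class of $h$ you could actually produce.

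The missing idea is to build a continuous function first and only then approximate.
\begin{itemize}
\item Take $B=[\tfrac1{16},\tfrac18]\cup[\tfrac78,\tfrac{15}{16}]$. Then $4B=[\tfrac14,\tfrac34]$ is disjoint from $B$.
\item Let $\varphi$ be an even plateau bump supported on $B$, and set $u=\varphi-\int\varphi$. For every $x$, either $x\notin B$ or $4x\notin B$, so $\min\{u(x),u(4x)\}=-\int\varphi<-0.1$.
\item Uniformly approximate $u$ by a trigonometric polynomial, symmetrize, and remove the constant term. This gives an even mean-zero $P=\sum_{n\le N}a_n\cos(2\pi nx)$ with $\max_x\min\{P(x),P(4x)\}<-0.05$.
\item A tiny perturbation makes every $a_n\neq 0$.
\end{itemize}

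This $P$ uses every frequency $1,\dots,N$, so you must be able to realize an arbitrary cosine polynomial with nonzero coefficients, not just one with no frequency coincidences. Take $\widehat G$ real and even, supported on $\pm1,\dots,\pm 2N$. Then solve $x_n^2x_{2n}=a_n/2$ for $n\le N$ along each doubling chain $m,2m,4m,\dots$ with $m$ odd:
\begin{itemize}
\item the sign of $x_{2n}$ is forced to equal that of $a_n$;
\item the magnitudes are determined from the top of each chain downward.
\end{itemize}
With $p>10N$ there are no wrap-around coincidences modulo $p$, and the rest of your argument goes through as written.
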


Let us first explain why Proposition~\ref{prop:k>2} implies Theorem~\ref{thm:k>2}.  Suppose \(f:\mathbb F_p^n\to[0,1]\) is as in Proposition~\ref{prop:k>2}.  We form a random subset \(A\subseteq\mathbb F_p^n\) by including each point \(x\in\mathbb F_p^n\) in $A$ independently with probability \(f(x)\).  Then
\[
    \mathbb E\,1_A(x)=f(x).
\]
By the bounded difference inequality, with probability tending to \(1\) as \(n\to\infty\), we have
\[
    \mathbb E_{x\in\mathbb F_p^n}1_A(x)
    =
    \mathbb E_{x\in\mathbb F_p^n}f(x)+o_n(1)
    =
    \frac12+o_n(1)
\]
and, uniformly for all \(d\neq 0\),
\[
    I_3(1_A)(d)
    =
    I_3(f)(d)+o_n(1).
\]
Indeed, changing the value of \(1_A\) at a single point affects the quantity \(I_3(1_A)(d)\) by at most \(O(p^{-n})\) and there are only \(p^n-1\) nonzero choices of \(d\).  Thus, a union bound over all \(d\neq 0\), together with concentration, gives simultaneous approximation of all the progression counts.  Hence, allowing for the slight decrease in the constant $c$, Proposition~\ref{prop:k>2} implies the existence of a deterministic set \(A\) satisfying the conclusion of Theorem~\ref{thm:k>2}. 

It remains to prove Proposition~\ref{prop:k>2}.  The construction uses a quadratic factor.  The idea is to choose \(f\) of the form
\[
    f(x)=F(Q(x)),
\]
where \(Q(x)=x_1^2+\cdots+x_n^2\).  For such functions, the three-term progression count can be reduced to a two-variable average over \(\mathbb F_p^2\).  This reduction is the content of the next subsection.

\subsection{Quadratic Fourier analysis} \label{sec:quadratic}
    
Define the quadratic polynomial $Q: \FF_p^n \rightarrow \FF_p$ by
\[
Q(x):=\sum_{i=1}^n x_i^2 
\]
for $x=(x_1,\dots,x_n)\in\FF_p^n$. 
We will make use of a function \(f : \FF_p^n \rightarrow \CC\) which is \(1\)-bounded and measurable
with respect to the factor\footnote{The idea of construction here is motivated by the arithmetic regularity decomposition for the \(U^3(\FF_p^n)\)-norm, where the structural part is given by averaging the original function over the atoms of a quadratic factor. We refer the reader to \cite[Section 3]{G07} for a more comprehensive introduction to the relevant decomposition.}
generated by \(Q\), that is, there exists \(F:\FF_p\to\CC\) such that
\[
f(x)=F(Q(x)).
\]
for all $x\in\FF_p^n$. 

For \(x\in\mathbb F_p^n\) and fixed \(d \neq 0\), define
\[
t:=Q(x), \, \,
u:=2\langle x,d\rangle, \, \,
v:=Q(d).
\]
Then, for \(k \in \FF_p\),
\[
Q(x+kd)=t+ku+k^2v.
\]
Therefore,
\[
I_3(f)(d)
=
\mathbb E_{x\in\FF_p^n}
\prod_{k=0}^2 F(t+ku+k^2v).
\]
The next lemma shows that, for fixed \(d\neq 0\), the pair
\[
    (t,u)=\bigl(Q(x),2\langle x,d\rangle\bigr)
\]
is equidistributed on \(\mathbb F_p^2\), up to an error which is \(O(p^{-n/2})\).  Consequently, the above average over \(x\in\mathbb F_p^n\) can be replaced by a uniform average over \(t,u\in\mathbb F_p\).
That is, 
\[
I_3(f)(d)
=
\mathbb E_{t,u\in\mathbb F_p}
F(t)\,F(t+u+v)\,F(t+2u+4v)
\;+\;
O\!\left(p^{-n/2}\right).
\]
Moreover, we also have 
\[
\EE_{x \in \FF_p^n} f(x)
=
\EE_{t \in \FF_p} F(t)
+
O\!\left(p^{-n/2}\right).
\]

\begin{lemma}\label{lem:fixed-d-equidistribution}
Fix $d\in\mathbb F_p^n$ with $d\neq 0$.  
For $(t,u)\in\mathbb F_p^2$, define
\[
N_d(t,u)
:=\#\Bigl\{x\in\mathbb F_p^n:\; Q(x)=t,\; 2\langle x,d\rangle=u\Bigr\}.
\]
Then
\[
\bigl|N_d(t,u)-p^{n-2}\bigr|
\;\lesssim
p^{n/2}.
\]
Equivalently, for uniformly random $x\in\mathbb F_p^n$,
\[
\left|
\mathbb P(Q(x)=t,\;2\langle x,d\rangle=u)
-\frac1{p^2}
\right|
\;\lesssim
p^{-n/2}.
\]
\end{lemma}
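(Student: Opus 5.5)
We count $N_d(t,u)$ by orthogonality of additive characters and reduce everything to Gauss sums. Write
\[
N_d(t,u)=\frac{1}{p^2}\sum_{a,b\in\FF_p}\sum_{x\in\FF_p^n} e_p\bigl(a(Q(x)-t)+b(2\langle x,d\rangle-u)\bigr).
\]
The term $(a,b)=(0,0)$ contributes exactly $p^{n-2}$. The remaining terms give the error, and we split them according to whether $a=0$.

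\emph{The terms with $a=0$, $b\neq 0$.} The inner sum is $\sum_x e_p(2b\langle x,d\rangle)$. Since $d\neq 0$ and $2b\neq 0$ (as $p$ is odd), this vanishes by orthogonality. So these terms contribute nothing.

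\emph{The terms with $a\neq 0$.} Here we complete the square coordinatewise:
\[
a x_i^2+2b d_i x_i=a\bigl(x_i+a^{-1}bd_i\bigr)^2-a^{-1}b^2d_i^2 .
\]
Translating $x$ then gives
\[
\sum_x e_p\bigl(aQ(x)+2b\langle x,d\rangle\bigr)=e_p\bigl(-a^{-1}b^2Q(d)\bigr)\,G(a)^n,
\qquad G(a):=\sum_{y\in\FF_p}e_p(ay^2).
\]
The classical Gauss sum evaluation gives $|G(a)|=\sqrt p$ for $a\neq 0$; alternatively this follows from squaring and using orthogonality. Hence each such term has modulus $p^{n/2}$ after including the factor $e_p(-at-bu)$.

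There are at most $p(p-1)$ pairs $(a,b)$ with $a\neq0$. The total error is therefore at most
\[
\frac{1}{p^2}\cdot p^2\cdot p^{n/2}=p^{n/2},
\]
which is the claimed bound $|N_d(t,u)-p^{n-2}|\lesssim p^{n/2}$. Dividing by $p^n$ gives the probabilistic form with error $p^{-n/2}$.

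The only nontrivial input is the Gauss sum modulus $|G(a)|=\sqrt p$. The only care needed is to check that the $a=0$ terms vanish identically rather than merely being small; this is exactly where $d\neq 0$ and $p$ odd are used. Neither step is a real obstacle. Note that the Weil bound (Lemma~\ref{WBound}) would also suffice for each one-dimensional sum, but the exact evaluation is cleaner.
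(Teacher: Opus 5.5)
Your proof is correct and follows the paper's argument step for step: orthogonality, the $(0,0)$ term giving $p^{n-2}$, exact vanishing of the $a=0$, $b\neq 0$ terms, completing the square, and $|G(a)|=\sqrt p$ giving a total error of at most $p^{n/2}$. One caution on your closing aside: Lemma~\ref{WBound} only gives $|G(a)|\le C\sqrt p$ with an unspecified constant $C$, which would yield $C^n p^{n/2}$ rather than a bound uniform in $n$; so the sharp bound $|G(a)|\le\sqrt p$, from the exact evaluation or your squaring argument, is genuinely needed and not merely cleaner.
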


\begin{proof}  
We use the orthogonality identity
\[
\mathbf 1_{\{0\}}(a)=\EE_{\lambda\in\mathbb F_p} e_p(\lambda a)
\]
to conclude that 
\[
N_d(t,u)
=\EE_{\alpha,\beta\in\mathbb F_p}
e_p(-\alpha t-\beta u)\,
S_d(\alpha,\beta),
\]
where
\[
S_d(\alpha,\beta)
:=\sum_{x\in\mathbb F_p^n}
e_p\!\bigl(\alpha Q(x)+2\beta\langle x,d\rangle\bigr).
\]
The term \((\alpha,\beta)=(0,0)\) contributes exactly
\[
\frac1{p^2}\sum_{x\in\mathbb F_p^n}1=p^{n-2}.
\]
If \(\alpha=0\) and \(\beta\neq 0\), then
\[
S_d(0,\beta)=\sum_{x} e_p(2\beta\langle x,d\rangle)=0,
\]
since \(d\neq 0\).

Now assume \(\alpha\neq 0\).  Completing the square gives
\[
\alpha Q(x)+2\beta\langle x,d\rangle
=\alpha Q\!\left(x+\frac{\beta}{\alpha}d\right)
-\frac{\beta^2}{\alpha}Q(d).
\]
Since translation does not alter the sum over \(x\),
\[
S_d(\alpha,\beta)
=e_p\!\left(-\frac{\beta^2}{\alpha}Q(d)\right)
\sum_{x\in\mathbb F_p^n} e_p(\alpha Q(x)).
\]
The quadratic Gauss sum factorizes coordinatewise, i.e.,
\[
\sum_{x\in\mathbb F_p^n} e_p(\alpha Q(x))
=\prod_{i=1}^n \sum_{z\in\mathbb F_p} e_p(\alpha z^2).
\]
Each one-dimensional Gauss sum has magnitude \(\sqrt p\), so
\begin{equation}\label{eq:Sd-gauss-bound}
|S_d(\alpha,\beta)|\lesssim p^{n/2}.
\end{equation}

Therefore, by \eqref{eq:Sd-gauss-bound},
\begin{align*}
\bigl|N_d(t,u)-p^{n-2}\bigr|
\le
\frac1{p^2}
\sum_{\substack{\alpha\neq 0,\\ \beta\in\mathbb F_p}}
|S_d(\alpha,\beta)| 
\lesssim
\frac1{p^2}\cdot (p-1)\cdot p \cdot p^{n/2}
\lesssim p^{n/2},
\end{align*}
as required.
\end{proof}

\subsection{Preliminary results}

The construction of \(F:\mathbb F_p\to[0,1]\) will be guided by a trigonometric polynomial on the torus.  The basic idea is to build a mean-zero function \(u\) on \(\mathbb T\) such that, for every \(x\), at least one of \(u(x)\) and \(u(4x)\) is bounded away from zero in the negative direction.  This will later be transferred to \(\mathbb F_p\) and encoded into the Fourier coefficients of a function \(G:\mathbb F_p\to\mathbb R\).

\begin{lemma} \label{lma_special function}
There exists a bounded continuous function \(u:\mathbb{T} \to\RR\)  such that
\begin{enumerate}
    \item \(u(-x)=u(x)\) for all \(x\in \mathbb{T}\),
    \item \(\int_{x\in \mathbb{T}} u(x)\, dx=0\),
    \item \(\max_{x\in \mathbb{T}} \min \{u(x), u(4x)\} < -0.1\).
\end{enumerate}
\end{lemma}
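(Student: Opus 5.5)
The plan is to build $u$ explicitly as a continuous ``tent'' function that equals $-1$ everywhere except on a short arc $E$ centred at $1/2$, where it rises to a tall peak chosen so that the mean is zero. The only property we need is that multiplication by $4$ carries $E$ into the region where $u=-1$. Then for every $x$, either $x\notin E$, so $u(x)=-1$, or $x\in E$, so $u(4x)=-1$. In both cases $\min\{u(x),u(4x)\}=-1<-0.1$.

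Concretely, fix $\delta=1/20$, let $E=[1/2-\delta,1/2+\delta]\subset\mathbb T$, and set
\[
u(x):=-1+\Bigl(\tfrac1\delta\Bigr)\max\Bigl(0,\,1-\tfrac{\|x-1/2\|_{\mathbb T}}{\delta}\Bigr).
\]
This function is continuous and bounded, with values in $[-1,\,1/\delta-1]$. It is even, because $\|{-x}-1/2\|_{\mathbb T}=\|x-1/2\|_{\mathbb T}$ (using $-1/2\equiv 1/2$), which gives property (1).

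For property (2), the tent has height $1/\delta$ and base $2\delta$, so its integral is $\tfrac12\cdot 2\delta\cdot\tfrac1\delta=1$. Hence $\int_{\mathbb T}u=-1+1=0$.

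For property (3), the key step is checking that $4E\cap E=\emptyset$. If $x=1/2+s$ with $|s|\le\delta$, then $4x=2+4s\equiv 4s$ in $\mathbb T$, so $\|4x\|_{\mathbb T}\le 4\delta=1/5$. Points of $E$ satisfy $\|y\|_{\mathbb T}\ge 1/2-\delta=9/20>1/5$, so $4x\notin E$. In fact $4x$ lies outside even the open arc where the tent is positive, since that arc is contained in $E$, so $u(4x)=-1$. Combined with the case $x\notin E$, this gives $\max_x\min\{u(x),u(4x)\}=-1<-0.1$.

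There is no serious obstacle here. The only point requiring care is choosing the arc small enough that its image under $x\mapsto 4x$, which is an arc of length $8\delta$ around $0$, stays away from $E$; any $\delta<1/10$ works. The resulting margin ($-1$ versus the required $-0.1$) is generous, which should leave room for the later transfer to $\mathbb F_p$. If a trigonometric polynomial is wanted for that transfer, one can replace $u$ by a Fejér mean of sufficiently high order. Such a mean preserves evenness and zero mean, and it converges uniformly to $u$ because $u$ is continuous, so for large order we still get $\max_x\min\{u(x),u(4x)\}<-0.9$.
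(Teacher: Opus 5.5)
Your construction is correct and is essentially the paper's argument: a nonnegative bump supported on a set $E$ with $4E\cap E=\emptyset$, minus its mean, so that at every $x$ at least one of $u(x),u(4x)$ equals the negative constant. The differences are cosmetic. You use a single arc centred at $1/2$, which makes evenness automatic, and you scale the tent to have integral $1$, which gives the value $-1$; the paper instead uses a height-one trapezoid on $[1/16,1/8]\cup[7/8,15/16]$ and gets $-m\le -0.121$.
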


\begin{proof}
Identify \(\mathbb{T}:=\mathbb{R}/\mathbb{Z}\) with \([0,1)\).
Let
\[
B_0:=\Big[\frac1{16},\frac18\Big],\;\;
B_1 := \Big[1-\frac18,1-\frac1{16}\Big] = \Big[\frac78,\frac{15}{16}\Big],\;\;
B:=B_0\cup B_1
=\Big[\frac1{16},\frac18\Big]\cup\Big[\frac78,\frac{15}{16}\Big].
\]
Then \(B\) is symmetric in the sense that \(x\in B\) implies \(-x\in B\) modulo \(1\).
Moreover,
\[
4B_0=\Big[\frac14,\frac12\Big],\; \;
4B_1=\Big[\frac12,\frac34\Big]\; (\mathrm{mod}\ 1),
\]
so
\[
4B=\Big[\frac14,\frac34\Big].
\]
In particular,
\begin{equation}\label{eq:4B-disjoint-B}
4B\cap B=\emptyset.
\end{equation}

Fix a small parameter \(\delta\in(0,10^{-2})\), for instance, \(\delta=10^{-3}\).  Define a continuous function \(\varphi:\mathbb{T}\to[0,1]\) supported on \(B\) by
\[
\varphi(x)=
\begin{cases}
0, & x\notin \big[\frac1{16},\frac18\big]\cup\big[\frac78,\frac{15}{16}\big],\\[4pt]
\psi(x), & x\in \big[\frac1{16},\frac18\big],\\[4pt]
\psi(1-x), & x\in \big[\frac78,\frac{15}{16}\big],
\end{cases}
\]
where \(\psi:[\frac1{16},\frac18]\to[0,1]\) is the function
\[
\psi(x)=
\begin{cases}
\frac{x-\frac1{16}}{\delta}, & x\in\big[\frac1{16},\frac1{16}+\delta\big],\\[6pt]
1, & x\in\big[\frac1{16}+\delta,\frac18-\delta\big],\\[6pt]
\frac{\frac18-x}{\delta}, & x\in\big[\frac18-\delta,\frac18\big].
\end{cases}
\]
By construction, \(\varphi\) is continuous, bounded, and even.
Let $m:=\int_{\mathbb{T}} \varphi(x)\,dx$ and define 
\[
u(x):=\varphi(x)-m.
\]
Then \(u\) is bounded, continuous, and even. Moreover, it has zero mean, since
\[
\int_{\mathbb{T}} u(x)\,dx=\int_{\mathbb{T}}\varphi(x)\,dx-m=m-m=0.
\]

We now show that, for every \(x\in\mathbb T\), at least one of \(u(x)\) and \(u(4x)\) equals \(-m\).
If \(x\notin B\), then \(\varphi(x)=0\), so \(u(x)=-m\).  If \(x\in B\), then \(4x\in 4B\) and, since \(4B\cap B=\emptyset\) by \eqref{eq:4B-disjoint-B}, we have \(\varphi(4x)=0\), so \(u(4x)=-m\).  Therefore,
\[
    \min\{u(x),u(4x)\}\leq -m
\]
for every \(x\in\mathbb T\). 

Finally, \(m\) is bounded below by the measure of the region where \(\varphi\equiv 1\).  On each component of \(B\), the plateau \(\varphi\equiv 1\) has length
\[
    \Big(\frac18-\frac1{16}\Big)-2\delta
    =
    \frac1{16}-2\delta.
\]
Thus,
\begin{equation}\label{eq:m-lower-bound-special-function}
m\ge 2\Big(\frac1{16}-2\delta\Big)=\frac18-4\delta.
\end{equation}
Choosing \(\delta=10^{-3}\), \eqref{eq:m-lower-bound-special-function} gives \(m\ge 0.121>0.1\).  Hence,
\[
\max_{x\in\mathbb{T}} \min\{u(x),u(4x)\}
\leq
-m
< -0.1,
\]
as required.
\end{proof}

The previous lemma gives a continuous function.  We next approximate it by a finite cosine polynomial.  The evenness and mean-zero conditions ensure that the approximation can be taken to involve only nonconstant cosine terms.  The small perturbation at the end is only used to guarantee that all the coefficients are nonzero, which will be convenient for our final construction over \(\mathbb F_p\).

\begin{corollary}\label{cor_special function}
There exist an integer \(N\ge 1\) and nonzero real coefficients \(a_1,\dots,a_N\) such that the trigonometric polynomial
\[
P(x):=\sum_{n=1}^N a_n \cos(2\pi n x)
\]
satisfies
\[
    \displaystyle \max_{x\in\mathbb T}\min\{P(x),P(4x)\}< -0.05.
\]
\end{corollary}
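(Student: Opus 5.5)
We start from the function \(u\) given by Lemma~\ref{lma_special function}, which is continuous, even, has mean zero, and satisfies \(\max_{x}\min\{u(x),u(4x)\}<-0.1\). Since \(u\) is continuous on \(\mathbb T\), Fejér's theorem shows that the Cesàro means \(\sigma_N u\) converge to \(u\) uniformly. Choose \(N\) with \(\|\sigma_N u-u\|_{L^\infty(\mathbb T)}<0.02\). The Fourier coefficients of \(\sigma_N u\) are \((1-|n|/(N+1))\widehat u(n)\), so they inherit the properties of \(\widehat u\). Because \(\int u=0\), the zeroth coefficient vanishes. Because \(u\) is real and even, \(\widehat u(n)=\widehat u(-n)\in\mathbb R\). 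Therefore
\[
    \sigma_N u(x)=\sum_{n=1}^{N} b_n\cos(2\pi n x), \qquad b_n\in\mathbb R,
\]
with no constant term. At this stage some \(b_n\) may be zero.

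For every \(x\) we have \(\min\{\sigma_Nu(x),\sigma_Nu(4x)\}\le \min\{u(x),u(4x)\}+0.02<-0.08\). This holds because the minimum of two functions moves by at most the uniform perturbation.

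Next we make every coefficient nonzero. Set \(a_n:=b_n\) if \(b_n\neq0\), and \(a_n:=\eta\) if \(b_n=0\), with \(0<\eta\le 0.01/N\). Let \(P(x)=\sum_{n=1}^N a_n\cos(2\pi n x)\). Then
\[
    \|P-\sigma_N u\|_{L^\infty(\mathbb T)}\le N\eta\le 0.01,
\]
so \(\min\{P(x),P(4x)\}<-0.08+0.01=-0.07<-0.05\) for every \(x\in\mathbb T\). The maximum over \(\mathbb T\) is attained, since \(P\) is continuous and \(\mathbb T\) is compact. Hence the strict inequality holds for the maximum as well. We may also assume \(N\ge1\), because \(u\) is nonconstant and so some \(b_n\neq0\) for large \(N\). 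In any case, \(N\ge 1\) can be imposed by construction.

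The only point needing care is that the approximant has no constant term and uses only cosines. This is exactly where the evenness and mean-zero properties are used. Using Fejér means rather than partial Fourier sums avoids any convergence issue for a merely continuous \(u\). Everything else is a routine \(L^\infty\) perturbation.
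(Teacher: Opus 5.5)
Your proof is correct and follows essentially the same route as the paper: approximate the function $u$ from Lemma~\ref{lma_special function} uniformly by a cosine polynomial with no constant term, then perturb the vanishing coefficients slightly. The only difference is that you use Fej\'er means, which automatically preserve evenness and the zero mean, whereas the paper invokes Stone--Weierstrass and then explicitly takes the even part and subtracts the mean, at the cost of an extra $\varepsilon$.
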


\begin{proof}
Let \(u:\mathbb{T} \to\RR\) be the function constructed in Lemma~\ref{lma_special function}.  We have that \(u\) is even, \(\int_{\mathbb T}u=0\), and
\[
    \max_{x\in\mathbb T}\min\{u(x),u(4x)\}< -0.1.
\]
In particular,  
we may choose a constant \(m_0>0.05\) such that
\begin{equation}\label{eq:u-negative-margin-special-function}
    \min\{u(x),u(4x)\}\leq -m_0
\end{equation}
for all $x \in \mathbb{T}$. 

By the Stone--Weierstrass theorem, trigonometric polynomials are uniformly dense in \(C(\mathbb T)\), the set of continuous real-valued functions on \(\mathbb T\).  Hence, for any \(\varepsilon>0\), there exists a trigonometric polynomial
\[
T(x)=c_0+\sum_{n=1}^N \big( \alpha_n\cos(2\pi n x)+\beta_n\sin(2\pi n x)\big)
\]
such that
\[
    \|T-u\|_\infty<\varepsilon.
\]
Define its even part $T_{\mathrm{even}}(x)$ by 
\[
T_{\mathrm{even}}(x):=\frac{T(x)+T(-x)}{2}
= c_0+\sum_{n=1}^N \alpha_n\cos(2\pi n x).
\]
Since \(u\) is even,
\[
    \|T_{\mathrm{even}}-u\|_\infty\le \|T-u\|_\infty<\varepsilon.
\]
We now enforce a zero mean by subtracting the constant term to get
\[
Q(x):=T_{\mathrm{even}}(x)-\int_{\mathbb T}T_{\mathrm{even}}(t)\,dt
=\sum_{n=1}^N \alpha_n\cos(2\pi n x).
\]
Then \(Q\) is an even cosine polynomial with \(\int_{\mathbb T}Q=0\).  Since \(\int_{\mathbb T}u=0\),
\begin{equation}\label{eq:Q-u-uniform-approximation}
\|Q-u\|_\infty \le \|T_{\mathrm{even}}-u\|_\infty
+\left|\int_{\mathbb T}(T_{\mathrm{even}}-u)\right|
< \varepsilon+\varepsilon=2\varepsilon.
\end{equation}
Choose \(\varepsilon>0\) sufficiently small that
\[
    2\varepsilon<\frac{m_0-0.05}{2}.
\]
Let \(\eta:=\|Q-u\|_\infty\).  Then, by \eqref{eq:Q-u-uniform-approximation},
\(\eta<\frac{m_0-0.05}{2}\).  Therefore, for every \(x \in \mathbb{T}\), by
\eqref{eq:u-negative-margin-special-function},
\[
\min\{Q(x),Q(4x)\}
\le
\min\{u(x),u(4x)\}+\eta
\le
-m_0+\eta
< -0.05.
\]
Thus,
\begin{equation}\label{eq:Q-negative-margin-special-function}
\max_{x\in\mathbb T}\min\{Q(x),Q(4x)\}< -0.05.
\end{equation}

Finally, let
\[
    S:=\{1\le n\le N:\ \alpha_n=0\}.
\]
If \(S=\emptyset\), we are done.  Otherwise, for a parameter \(\delta'>0\), define 
\[
R(x):=\delta'\sum_{n\in S}\cos(2\pi n x)
\]
and
\[
P(x):=Q(x)+R(x).
\]
Then
\[
    P(x)=\sum_{n=1}^N a_n\cos(2\pi n x),
\]
where \(a_n=\alpha_n\) for \(n\notin S\) and \(a_n=\delta'\) for \(n\in S\).  Hence, \(a_n\neq 0\) for all \(1\le n\le N\).

Moreover,
\[
    \|R\|_\infty\le \delta'|S|.
\]
Choose \(\delta'>0\) so small that \(\delta'|S|<\gamma\), where
\[
\gamma:=\frac12\Big(-0.05 - \max_{x\in\mathbb T}\min\{Q(x),Q(4x)\}\Big)>0.
\]
Here \(\gamma>0\) by \eqref{eq:Q-negative-margin-special-function}.  Then, for every \(x \in \mathbb{T}\),
\[
P(x)\le Q(x)+\gamma, \, \, P(4x)\le Q(4x)+\gamma,
\]
so 
\[
\min\{P(x),P(4x)\}
\le
\min\{Q(x),Q(4x)\}+\gamma.
\]
Taking the maximum over all \(x \in \mathbb{T}\) gives
\[
\max_{x\in\mathbb T}\min\{P(x),P(4x)\}< -0.05,
\]
as required.
\end{proof}

\subsection{Proof of Proposition \ref{prop:k>2}}

We now complete the construction of the weighted counterexample.  The previous corollary gives a trigonometric polynomial \(P\) on the torus satisfying
\[
    \max_{x \in \mathbb{T}}\min\{P(x),P(4x)\}< -0.05.
\]
Our goal now is to realize this trigonometric polynomial as the nonzero-frequency part of the three-term progression count associated with a function \(F:\mathbb F_p\to[0,1]\).

For \(F:\FF_p\to\RR_{\ge 0}\), define
\[
I_3(k;v)
:=
\EE_{t,u\in\FF_p}
F(t)\,F(t+ku+k^2v)\,F(t+2ku+4k^2v).
\]
By the reduction in Section~\ref{sec:quadratic}, if \(f(x)=F(Q(x))\), the quantity \(I_3(f)(d)\) is approximated by \(I_3(1;Q(d))\), while \(I_3(f)(2d)\) is approximated by \(I_3(2;Q(d))\).  Thus, it is enough to construct \(F:\FF_p\to[0,1]\) with \(\mathbb E F=1/2\) such that
\[
\max_{v \in \FF_p} \min \{I_3(1;v),I_3(2;v)\} \leq (1/2)^3 -c
\]
for some absolute constant \(c>0\).

To better understand \(I_3(k;v)\), we make a change of variables.  For \(k\neq 0\), set \(w:=ku+k^2v\).  Then \(w\) is uniform in \(\FF_p\) and if we define
\[
J_3(s):=\EE_{t,w\in\FF_p} F(t)\,F(t+w)\,F(t+2w+s),
\]
then
\[
    I_3(k;v)=J_3(2k^2v).
\]
Indeed, after setting \(w=ku+k^2v\), the third argument in $I_3(k;v)$ becomes
\[
    t+2ku+4k^2v=t+2w+2k^2v,
\]
which is the third argument in $J_3(s)$ when \(s=2k^2v\). 

Therefore, our goal becomes to show that 
\begin{equation} \label{Condition for counterexample}
    \max_{s \in \FF_p} \min \{J_3(s),J_3(4s)\} \leq (1/2)^3 -c.
\end{equation}
To this end, we rewrite \(J_3(s)\) in Fourier terms, as 
\begin{align*}
    J_3(s)
    &=\EE_{t,w\in\FF_p}
    \sum_{\xi_1,\xi_2,\xi_3\in\FF_p}
    \widehat F(\xi_1) \widehat F(\xi_2) \widehat F(\xi_3)
    e_p(\xi_1t+\xi_2(t+w)+\xi_3(t+2w+s))\\
    &=\sum_{\xi_1,\xi_2,\xi_3 \in\FF_p}
    \widehat F(\xi_1) \widehat F(\xi_2) \widehat F(\xi_3)
    e_p(\xi_3s)\cdot
    \EE_{t,w\in\FF_p}
    e_p(\xi_1t+\xi_2(t+w)+\xi_3(t+2w))\\
    &=\sum_{\xi_1,\xi_2,\xi_3\in\FF_p}
    \widehat F(\xi_1) \widehat F(\xi_2) \widehat F(\xi_3)
    e_p(\xi_3s)\cdot
    \mathbf 1_{\xi_1+\xi_2+\xi_3=0}
    \mathbf 1_{\xi_2+2\xi_3=0}\\
    &=\sum_{\xi \in \FF_p}
    \widehat F(\xi)\widehat F(-2\xi)\widehat F(\xi)e_p(\xi s)\\
    &=\sum_{\xi \in \FF_p}
    \widehat F(\xi)^2 \widehat F(-2\xi)e_p(\xi s),
\end{align*}
where we used the parametrization \(\xi_1=\xi_3=\xi\) and \(\xi_2=-2\xi\).

We now construct the required function \(F\).  The following lemma says that any finite cosine polynomial with nonzero coefficients can be realized as the nonzero-frequency contribution
\[
    \sum_{\xi\neq 0}
    \widehat G(\xi)^2\widehat G(-2\xi)e_p(\xi s)
\]
for a suitable real-valued mean-zero function \(G:\mathbb F_p\to\mathbb R\).

\begin{lemma}\label{lma_assign value}
Let $N\ge 1$ and let
\[
P_p(s):=\sum_{n=1}^N a_n \cos\!\Big(\frac{2\pi n s}{p}\Big)
\]
be a trigonometric polynomial on $\mathbb F_p$ (viewing $s\in\mathbb F_p$ as an integer in $\{0,1,\dots,p-1\}$) with nonzero real coefficients $a_n$.
Provided $p>10N$, there exists a real-valued function $G:\mathbb F_p\to\mathbb R$ with $\EE G=0$ such that,
for every $s\in\mathbb F_p$,
\begin{equation}\label{eq:target-identity}
\sum_{\xi\in\mathbb F_p \setminus \{0\}}\widehat G(\xi)^2 \widehat G(-2\xi) e_p(\xi s)
\;=\;P_p(s).
\end{equation}
\end{lemma}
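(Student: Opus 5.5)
The plan is to prescribe $\widehat G$ directly and then define $G$ by Fourier inversion, $G(x)=\sum_\xi \widehat G(\xi)e_p(\xi x)$. I will take $\widehat G$ real-valued and even, with $\widehat G(0)=0$. Evenness and reality give $\overline{\widehat G(\xi)}=\widehat G(-\xi)$, so $G$ is real-valued. The condition $\widehat G(0)=0$ gives $\EE G=0$.

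Writing $P_p(s)=\sum_{n=1}^N \frac{a_n}{2}\bigl(e_p(ns)+e_p(-ns)\bigr)$ and comparing coefficients of the characters $e_p(\xi s)$, identity \eqref{eq:target-identity} is equivalent to two conditions. First,
\[
\widehat G(\xi)^2\,\widehat G(-2\xi)=\frac{a_n}{2}\qquad\text{for }\xi=\pm n,\ 1\le n\le N.
\]
Second, $\widehat G(\xi)^2\widehat G(-2\xi)=0$ for every other nonzero $\xi$.

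I will support $\widehat G$ on $\pm\{1,\dots,2N\}$, viewed as residues. Because $p>10N$, the residues $\pm1,\dots,\pm4N$ are pairwise distinct and nonzero modulo $p$, so no wraparound coincidences can occur.

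Write $g(m):=\widehat G(m)=\widehat G(-m)$ for $1\le m\le 2N$, and set $\widehat G=0$ elsewhere. By evenness the required conditions become $g(n)^2g(2n)=a_n/2$ for $1\le n\le N$. The vanishing condition then takes care of itself, for the following reasons.
\begin{itemize}
\item If $\xi$ lies outside the support, then $\widehat G(\xi)=0$.
\item If $\xi=\pm m$ with $N<m\le 2N$, then $-2\xi=\mp 2m$ with $2m>2N$. Since there is no wraparound, $-2\xi$ is outside the support, so $\widehat G(-2\xi)=0$.
\end{itemize}

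It remains to solve the system $g(n)^2g(2n)=a_n/2$ for $1\le n\le N$ with every $g(m)$ real and nonzero. The system decouples into chains $n_0,2n_0,4n_0,\dots$ indexed by the odd numbers $n_0\le N$. I will solve each chain from the top down.

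\begin{itemize}
\item \textbf{Free values.} For $N<m\le 2N$ the value $g(m)$ is free. If $m=2n$ with $n\le N$, set $g(m):=\operatorname{sign}(a_n)$; otherwise set $g(m):=1$.
\item \textbf{Downward step.} Then go downward through $n=N,N-1,\dots,1$. When $n$ is reached, $g(2n)$ has already been fixed and is nonzero with $\operatorname{sign} g(2n)=\operatorname{sign} a_n$. Hence $a_n/(2g(2n))>0$, and I set
\[
g(n):=\pm\sqrt{a_n/(2g(2n))}.
\]
\item \textbf{Sign choice.} Choose the sign to be $\operatorname{sign}(a_{n/2})$ if $n$ is even, and arbitrarily if $n$ is odd. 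This ensures that when the step reaches $n/2$, the sign requirement $\operatorname{sign} g(n)=\operatorname{sign} a_{n/2}$ holds.
\end{itemize}

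The hypothesis that every $a_n\neq0$ is exactly what keeps all the $g(n)$ nonzero and the square roots real throughout the recursion.

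The only delicate point is this sign bookkeeping. The factor $g(n)^2$ is forced to be positive, so $g(2n)$ must carry the sign of $a_n$. The recursion handles this by exploiting the free sign of each square root one level up the chain.

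Everything else is a routine coefficient comparison using $p>10N$. In particular, $\pm n$ for $n\le N$ and $\mp2n$ are genuinely distinct residues, so each frequency $\xi$ contributes exactly one term to the left side of \eqref{eq:target-identity}.
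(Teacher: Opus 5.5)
Your proposal is correct and follows the same route as the paper: an even, real $\widehat G$ supported on $\pm\{1,\dots,2N\}$, the no-aliasing observation from $p>10N$, and a top-down solution of $g(n)^2g(2n)=a_n/2$ along the doubling chain of each odd $n_0\le N$. Your sign bookkeeping is genuinely needed, and it is more careful than the paper's own argument: the paper matches the sign only at the top of each chain and then takes positive square roots throughout, which as written breaks down whenever some $a_n$ with $2n\le N$ is negative, and choosing $\operatorname{sign} g(n)=\operatorname{sign} a_{n/2}$ for even $n$, as you do, is exactly the repair.
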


Assuming Lemma~\ref{lma_assign value} for the moment, we finish the proof of Proposition~\ref{prop:k>2}.  By Corollary~\ref{cor_special function}, we may choose \(N\) and nonzero coefficients \(a_1,\dots,a_N\) such that
\[
    P(x)=\sum_{n=1}^N a_n\cos(2\pi n x)
\]
satisfies
\[
    \max_{x\in\mathbb T}\min\{P(x),P(4x)\}< -0.05.
\]
For \(p>10N\), apply Lemma~\ref{lma_assign value} to the finite-field polynomial
\[
    P_p(s)=\sum_{n=1}^N a_n\cos\!\Big(\frac{2\pi n s}{p}\Big)
\]
to obtain a real-valued function \(G:\mathbb F_p\to\mathbb R\) with \(\mathbb E G=0\) such that
\[
\max_{s\in\mathbb F_p}
\min\left\{
\sum_{\xi\in\mathbb F_p \setminus \{0\}}\widehat G(\xi)^2\widehat G(-2\xi)e_p(\xi s),
\sum_{\xi\in\mathbb F_p \setminus \{0\}}\widehat G(\xi)^2\widehat G(-2\xi)e_p(4\xi s)
\right\}
< -0.05.
\]

Now set
\[
F(x)=\frac{1}{2}+\frac{G(x)}{2\|G\|_{\infty}}.
\]
Then \(0\leq F\leq 1\) and \(\mathbb E F=1/2\).  
Thus, the zero frequency of \(F\) is \(1/2\), while, for \(\xi\neq 0\),
\[
    \widehat F(\xi)=\frac{\widehat G(\xi)}{2\|G\|_\infty}.
\]
Hence,
\[
    J_3(s)
    =
    \frac18
    +
    \frac{1}{(2\|G\|_\infty)^3}
    \sum_{\xi\neq 0}
    \widehat G(\xi)^2\widehat G(-2\xi)e_p(\xi s).
\]
Therefore,
\[
    \max_{s\in\mathbb F_p}\min\{J_3(s),J_3(4s)\}
    \leq
    \frac18-\frac{0.05}{(2\|G\|_\infty)^3}.
\]
Thus, \eqref{Condition for counterexample} holds with
\[
    c=\frac{0.05}{(2\|G\|_\infty)^3}.
\]

Finally, define
\[
    f(x):=F(Q(x))
\]
for $x\in\mathbb F_p^n$. 
The equidistribution estimate Lemma~\ref{lem:fixed-d-equidistribution} gives
\[
    \mathbb E_{x\in\mathbb F_p^n}f(x)
    =
    \mathbb E_{t\in\mathbb F_p}F(t)+O(p^{-n/2})
    =
    \frac12+o_n(1)
\]
and, uniformly for every \(d\neq 0\),
\[
    I_3(f)(d)
    =
    I_3(1;Q(d))+O(p^{-n/2})
\]
and 
\[
    I_3(f)(2d)
    =
    I_3(2;Q(d))+O(p^{-n/2}).
\]
Therefore, since $I_3(k;Q(d))=J_3(2k^2Q(d))$ for $k = 1, 2$, 
\[
    \max_{d\neq 0}
    \min\{I_3(f)(d),I_3(f)(2d)\}
    \leq
    \frac18-c+o_n(1).
\]
For \(n\) sufficiently large, after replacing \(c\) by \(c/2\), this proves Proposition~\ref{prop:k>2}.

It remains only to prove Lemma~\ref{lma_assign value}.

\begin{proof}[Proof of Lemma \ref{lma_assign value}]
For any \(G:\mathbb F_p\to\mathbb C\), define
\[
H(s):=\sum_{\xi\in\mathbb F_p}\widehat G(\xi)^2\,\widehat G(-2\xi)\,e_p(\xi s).
\]
Then the Fourier coefficient of \(H\) at frequency \(\xi\) is
\[
\widehat H(\xi)=\widehat G(\xi)^2\,\widehat G(-2\xi).
\]
On the other hand,
\[
P_p(s)=\sum_{n=1}^N \frac{a_n}{2}\big(e_p(ns)+e_p(-ns)\big),
\]
so \(\widehat{P_p}(\pm n)=a_n/2\) for \(1\le n\le N\) and \(\widehat {P_p}(\xi)=0\) otherwise.

Thus, it suffices to choose \(\widehat G\) such that
\begin{equation}\label{eq:coeff-eqns}
\widehat G(\xi)^2\,\widehat G(-2\xi)=\widehat{P_p}(\xi)
\end{equation}
for all $\xi\in\mathbb F_p$. 
We impose
\begin{equation}\label{eq:supp}
\widehat G(\xi)=0 \;\; \text{unless}\;\; \xi\in\{\pm 1,\dots,\pm 2N\}.
\end{equation}
Since \(p>10N\), the integers \(\pm 1,\dots,\pm 4N\) represent distinct nonzero elements of \(\mathbb F_p\).  Hence, if \(N<|\xi|\le 2N\), then \(|-2\xi|>2N\), so by \eqref{eq:supp} we have \(\widehat G(-2\xi)=0\) and, therefore, \(\widehat H(\xi)=0\).  If \(|\xi|>2N\), then \(\widehat G(\xi)=0\), so again \(\widehat H(\xi)=0\).  Consequently, \(\widehat H(\xi)=0\) for all \(|\xi|>N\), so \eqref{eq:coeff-eqns} reduces to the requirement that 
\[
\widehat H(\pm n)=\frac{a_n}{2} 
\]
for all $1\le n\le N$. 

Write
\[
\widehat G(\pm k)=x_k\in\mathbb R 
\]
for $1\le k\le 2N$ 
and \(\widehat G(\xi)=0\) otherwise.  Then, for \(1\le n\le N\),
\[
\widehat H(n)=\widehat H(-n)=x_n^2\,x_{2n}.
\]
Thus, it remains to solve the real system
\begin{equation}\label{eq:chain-system}
x_n^2\,x_{2n}=b_n
\end{equation}
for $1\le n\le N$, where $b_n:=\frac{a_n}{2}\neq 0$. 

For each odd integer \(m\le N\), consider the doubling chain
\[
m, 2m, 4m, \dots, 2^k m \le N < 2^{k+1}m \le 2N,
\]
where \(k\ge 0\) is maximal with \(2^k m\le N\).  

Fix such an odd \(m\).  Choose an arbitrary nonzero real number \(x_{2^{k+1}m}\) with sign satisfying
\[
\mathrm{sgn}(x_{2^{k+1}m})=\mathrm{sgn}(b_{2^k m}),
\]
so that
\[
    \frac{b_{2^k m}}{x_{2^{k+1}m}}>0.
\]
Now define recursively, for \(j=k,k-1,\dots,0\),
\[
x_{2^j m}\ :=\ \sqrt{\frac{b_{2^j m}}{x_{2^{j+1}m}}}.
\]
Then, by construction,
\[
x_{2^j m}^2\,x_{2^{j+1}m}=b_{2^j m}
\]
for $j=0,1,\dots,k$, which is exactly \eqref{eq:chain-system} for all \(n\) in this chain.

Perform this construction independently for each odd \(m\le N\).  Since every \(1\le n\le N\) can be written uniquely as \(n=2^j m\) for some odd \(m\), this defines \(x_n\) and \(x_{2n}\) for all \(1\le n\le N\). 

Finally, with \(\widehat G\) defined as above, the function \(G\) is real-valued because
\[
    \widehat G(\xi)=\widehat G(-\xi)=\overline{\widehat G(\xi)}.
\]
Moreover, \(\widehat G(0)=0\), so \(\mathbb E G=0\).  By construction, \(\widehat H(\xi)=\widehat {P_p}(\xi)\) for every \(\xi\in\mathbb F_p\).  Hence, \(H(s)=P_p(s)\) for all \(s\in\mathbb F_p\), which is exactly \eqref{eq:target-identity}.
\end{proof}

\section{Concluding remarks}

\subsection*{Quantitative bounds}

Over $\mathbb{F}_p$, Green's result states that if $p \ge p_0(\varepsilon)$, then every $A \subseteq \mathbb{F}_p$ of density $\alpha$ contains a nonzero $d$ such that the density of three-term arithmetic progressions in $A$ with common difference $d$ is at least $\alpha^3 - \varepsilon$. Like many results proved using regularity methods, Green's proof of his popular difference theorem gives a tower-type bound on $p_0(\varepsilon)$. A surprising result of Fox, Pham, and Zhao~\cite{FP21, FPZ23} shows that such a bound is necessary, suggesting that regularity is necessary in this context. It would be interesting to decide whether the same is true of our Theorem~\ref{thm_simultaneous polynomial common difference}. We are not sure which way the truth should lie, as it is plausible that regularity could be dispensed with in this setting.

\subsection*{Rational functions}

For three-term progressions, that is, when $k = 2$, Theorem \ref{thm_simultaneous polynomial common difference} can be extended to rational function progressions, assuming that the two rational functions $P_1, P_2$ that appear and the constant function $1$ are linearly independent. A result of Hong and Lim~\cite{HL25} (see also~\cite{H25, L25}), answering a question of Bourgain and Chang~\cite{BC17}, gives the analogue of Peluse's result, Theorem~\ref{thm_polynomial Szemeredi}, in this setting. The analogue of Lemma~\ref{lma_equdistribution for poly. sequence}, saying that independent rational functions equidistribute over a low-rank Bohr set, requires exponential sum estimates beyond the Weil bound, but the proof is otherwise the same.

\subsection*{Further variations}

In Theorem~\ref{thm:k>2}, we showed that there are sets $A \subseteq \mathbb{F}_p^n$ for which there is no nonzero $d$ such that both $d$ and $2d$ are popular differences. We do not know the answer for the analogous question with $d$ and $d^2$. Moreover, while it is true, by an application of Green's regularity lemma~\cite{G05}, that there are simultaneous popular differences $d$ for $x, x+d$ and $x, x+d, x+2d$, we do not know if there are for $x, x+d, x+2d$ and $x, x+d, x+2d, x+3d$, that is, for both three- and four-term progressions. We do not even know the answer for $x, x+d, x+2d$ and $x, x+d, x+3d$.


\bibliographystyle{alpha}
\bibliography{bibliography}

\end{document}